 \newcommand{\eps}{{\varepsilon}}
 \def\1{\raisebox{2pt}{\rm{$\chi$}}}
\def\vint_#1{\mathchoice%
          {\mathop{\kern 0.2em\vrule width 0.6em height 0.69678ex depth -0.58065ex
                  \kern -0.8em \intop}\nolimits_{\kern -0.4em#1}}%
          {\mathop{\kern 0.1em\vrule width 0.5em height 0.69678ex depth -0.60387ex
                  \kern -0.6em \intop}\nolimits_{#1}}%
          {\mathop{\kern 0.1em\vrule width 0.5em height 0.69678ex
              depth -0.60387ex
                  \kern -0.6em \intop}\nolimits_{#1}}%
          {\mathop{\kern 0.1em\vrule width 0.5em height 0.69678ex depth -0.60387ex
                  \kern -0.6em \intop}\nolimits_{#1}}}
\def\vintslides_#1{\mathchoice%
          {\mathop{\kern 0.1em\vrule width 0.5em height 0.697ex depth -0.581ex
                  \kern -0.6em \intop}\nolimits_{\kern -0.4em#1}}%
          {\mathop{\kern 0.1em\vrule width 0.3em height 0.697ex depth -0.604ex
                  \kern -0.4em \intop}\nolimits_{#1}}%
          {\mathop{\kern 0.1em\vrule width 0.3em height 0.697ex depth -0.604ex
                  \kern -0.4em \intop}\nolimits_{#1}}%
          {\mathop{\kern 0.1em\vrule width 0.3em height 0.697ex depth -0.604ex
                  \kern -0.4em \intop}\nolimits_{#1}}}
\newcommand{\aveint}[2]{\mathchoice%
          {\mathop{\kern 0.2em\vrule width 0.6em height 0.69678ex depth -0.58065ex
                  \kern -0.8em \intop}\nolimits_{\kern -0.45em#1}^{#2}}%
          {\mathop{\kern 0.1em\vrule width 0.5em height 0.69678ex depth -0.60387ex
                  \kern -0.6em \intop}\nolimits_{#1}^{#2}}%
          {\mathop{\kern 0.1em\vrule width 0.5em height 0.69678ex depth -0.60387ex
                  \kern -0.6em \intop}\nolimits_{#1}^{#2}}%
          {\mathop{\kern 0.1em\vrule width 0.5em height 0.69678ex depth -0.60387ex
                  \kern -0.6em \intop}\nolimits_{#1}^{#2}}}
\newtheorem{theorem}{Theorem}[section]
\newtheorem{corollary}[theorem]{Corollary}
\newtheorem{lemma}[theorem]{Lemma}
\newtheorem{definition}[theorem]{Definition}
\newtheorem{remark}[theorem]{Remark}
\newtheorem{example}[theorem]{Example}
 \newcommand{\trace}{\operatorname{trace}}
  \newcommand{\R}{\mathbb{R}}
\newcommand{\xintloo}[1]{\int\limits_{#1} \kern-18pt\raise4pt\hbox to7pt {\hrulefill}\ }   \numberwithin{equation}{section}
\def\Xint#1{\mathchoice
   {\XXint\displaystyle\textstyle{#1}}%
   {\XXint\textstyle\scriptstyle{#1}}%
   {\XXint\scriptstyle\scriptscriptstyle{#1}}%
   {\XXint\scriptscriptstyle\scriptscriptstyle{#1}}%
   \!\int}
\def\XXint#1#2#3{{\setbox0=\hbox{$#1{#2#3}{\int}$}
     \vcenter{\hbox{$#2#3$}}\kern-.5\wd0}}
\def\dashint{\Xint-}
 \DeclareMathOperator{\tr}{trace}
\numberwithin{theorem}{section}
\numberwithin{equation}{section}
\def \A{\mathcal A}
\def \eps{\varepsilon}
\def \R{\mathbb R}
\begin{document}

\title[Asymptotic Mean-Value Formulas for Solutions of General Second-Order PDE]{Asymptotic Mean-Value Formulas for Solutions of General Second-Order Elliptic Equations
}

\thanks{P.B. partially supported by the Academy of Finland project no. 298641. \\
\indent F.C. partially supported by grant MTM2017-84214-C2-1-P and 
PID2019-110712GB-I100 funded by MCIN/AEI/10.13039/501100011033 and by ``ERDF A way of making Europe''.
 \\
\indent J.D.R. partially supported by 
CONICET grant PIP GI No 11220150100036CO
(Argentina), PICT-2018-03183 (Argentina) and UBACyT grant 20020160100155BA (Argentina).
}

\author[P. Blanc]{Pablo Blanc}
\address{Department of Mathematics and Statistics, University of Jyv\"askyl\"a, PO Box 35, FI-40014, \newline
\noindent Jyv\"askyl\"a, Finland}
\email{pblanc@dm.uba.ar}

\author[F. Charro]{Fernando Charro}
\address{Department of Mathematics, Wayne State University, 656 W. Kirby, Detroit, MI 48202, USA}
\email{fcharro@wayne.edu}
\thanks{}

\author[J. J. Manfredi]{Juan J. Manfredi}
\address{
Department of Mathematics,
University of Pittsburgh, Pittsburgh, PA 15260, USA.
}
\email{manfredi@pitt.edu}

\author[J. D. Rossi]{Julio D. Rossi}
\address{Departamento  de Matem\'atica, FCEyN, Universidad de Buenos Aires,
 Pabell\'on I, Ciudad Universitaria (1428),
Buenos Aires, Argentina.}
\email{jrossi@dm.uba.ar}

\newcommand{\pablo}[1]{{\color{blue}{#1}}}
\long\def\comment#1{\marginpar{\raggedright\small$\bullet$\ #1}}

\keywords{Mean-value formulas, viscosity solutions, $k$-Hessian equations, Issacs equations.
\\
\indent 2020 {\it Mathematics Subject Classification:}
35J60, % Nonlinear elliptic equations
% 35J96, % Elliptic Monge-Amp?re equations
35D40, %  Viscosity solutions to PDEs
35B05%	Oscillation, zeros of solutions, mean-value theorems
}
\date{}

\begin{abstract}
We obtain asymptotic mean-value formulas for solutions of second-order elliptic equations.
Our approach is very flexible and allows us to consider several families of operators  obtained as an infimum, a supremum, or a combination of both infimum and supremum, of linear operators. 
The families of equations that we consider include well-known operators such as Pucci, Issacs, and $k$-Hessian operators. 
\end{abstract}

\maketitle

%{\singlespacing
%\tableofcontents        
%}

%\newpage

\section{Introduction}\label{sec.intro}
It is well-known that  mean-value formulas characterize harmonic functions; %For example, we have
%$$
%\Delta u (x) =0 \ \ \mbox{in } \Omega \quad\iff\quad u(x) = \dashint_{B_\varepsilon(x)} u(y)\,dy  \  \ \mbox{for every } B_\varepsilon (x) \subset \Omega.
%$$ 
%Here and in what follows, $\Omega \subset \mathbb{R}^n$ is an open set. 
in fact, a weaker statement, 
known as  the asymptotic mean-value property,  is enough to characterize harmonicity
 (see \cite{Blaschke,Ku,Privaloff}).
 Furthermore,
we have
\[
\Delta u(x)=\lim_{\varepsilon \to0}\frac{2(n+2)}{\varepsilon^2}\left(\dashint_{B_\varepsilon(x)} u(y)\,dy-u(x)\right),
\]
from which we conclude that, when $f$ is continuous, a function $u$ satisfies the classical Poisson equation $\Delta u(x)=f(x)$ in $\Omega$ if and only if 
\begin{equation}
\label{Laplaciano.formula.asintotica}
u(x)=\dashint_{B_\varepsilon (x)} u(y)\,dy-\frac{\varepsilon^2}{2(n+2)}\,f(x)+o(\varepsilon^2)\qquad\textrm{as} \ \varepsilon \to0
\end{equation}
 for each $x \in \Omega$.
We use the standard notation  $o(\varepsilon^2)$ to denote a quantity 
such that $o(\varepsilon^2)/\varepsilon^2 \to 0 $
as $\varepsilon \to 0$.
Analogous results hold for sub- and supersolutions replacing equalities by appropriate inequalities.

The classic mean-value property can be seen as a nonlocal formulation of the local Laplace and Poisson equations.
It has deep connections with other fundamental properties  such as the maximum principle, which allows proving existence results by Perron's method or symmetry of solutions by the moving-planes method. In this sense,  the mean-value property and the maximum principle are nothing more than a quantitative expression of the monotonicity inherent to the equation.

In our recent paper \cite{[Blanc et al. 2020]} we established asymptotic mean-value properties in the viscosity sense for solutions to  the Monge-Amp\`ere equation,
\begin{equation}\label{equation.intro.model}
 \det D^2u (x)=f (x),\quad \textrm{in}\ \Omega,
\end{equation}
which is elliptic only in the class of convex functions and, consequently,  requires $f\geq0$.
Our results in \cite{[Blanc et al. 2020]} are based on the formula
\[
n\big(\det D^2u(x)\big)^{1/n}=\inf_{\det A=1}{\rm trace}(A^tD^2u(x)A),
\]
which holds whenever $D^2u(x)\geq 0$.
Mean-value properties in the viscosity sense require more geometrical arguments adapted to  the operator. In particular, our methods do not require explicit representation formulas, making them flexible enough to be applied to various nonlinear problems.

%Mean-value formulas are a robust and flexible tool in analysis.
In this article, we establish asymptotic mean-value formulas for a wide array of fully nonlinear equations that includes degenerate operators such as the $k$-Hessians, \cite{Ca-Ni-Sp2}, truncated Laplacians, \cite{[Birindeli et al. 2020]}, and prescribed eigenvalues of the Hessian, \cite{BR,OS}. %, as well as general uniformly elliptic operators,  degenerate Isaacs equations, and other
The operators we discuss  can be written as an infimum of linear operators with 
coefficients chosen from a given set, see below for examples. Of course, there are corresponding statements for equations involving a supremum or combinations of infimum and supremum.

Mean-value formulas hold under more lenient regularity conditions than the corresponding PDEs and can provide a simple and unified approach to nonlinear equations in non-Euclidean contexts such as Carnot groups. 
As a proof of concept, we prove mean-value formulas for Monge-Amp\`ere operators in the Heisenberg group in Section \ref{section.heisenberg} below.
Other directions for applications of the asymptotic mean-value formulas below concern  game-theoretic interpretations of the corresponding PDEs, and their numerical analysis.
In this regard, there are convergent difference schemes for the normalized infinity Laplacian and $p$-Laplacian using mean-value formulas, see \cite{[Oberman 2013]}. Hence, the mean-value formulas that we develop here could provide new numerical methods for the corresponding nonlinear equations.

%\subsection{Operators involving unbounded sets of coefficients} 
%\label{s-bounded.99} 
Let us now describe our results. First, we consider differential operators of the form $F(D^2 u)$, with $F:S^n(\R)\to \R\cup\{-\infty\}$ given by
\begin{equation}\label{eq.thm.main3.intro}
F(M)=\inf_{A\in\A} \tr(A^tMA).
\end{equation}
Here, $\A$ is a  subset of $S_+^n(\mathbb{R})$, the set of symmetric positive semi-definite matrices. 
We observe in Lemma ~\ref{Axbounded} below that the set $\A$ is bounded if and only if  the operator $F(M)$ is  well-defined (finite) for all $M\in S^n(\R)$.  
Therefore,  $\A$ determines the set of admissible solutions, functions for which  $F(D^2u)$ is  finite. 
We say that 
\begin{equation} 
u \in C^2(\Omega)\text{ is }  \mathcal{A}\text{-admissible} \text{ in } \Omega \text{ if } D^2 u(x)\in \Gamma_\A \text{ for every } x\in\Omega,
\end{equation}
where
we define the cone
\[
\Gamma_\A=\Big\{M\in S^{n}(\mathbb{R}): F(M)>-\infty \Big\}
\]
(see \cite{Ca-Ni-Sp2} for a related notion of admissibility). 
This condition plays an analogous role to the convexity for the Monge-Amp\`ere equation;
in fact, 
\begin{equation}\label{mongeasinf}
\inf_{A\in\mathcal{A}} \textrm{trace}(A^tD^2u(x)A)
=
\left\{
\begin{aligned}
&n\left(
\det{D^2u(x)}
\right)^{1/n}
 & &\textrm{if}\ D^2u(x)\geq0\\
& -\infty& &\textrm{otherwise},
\end{aligned}
\right.
\end{equation}
for the unbounded set $\mathcal{A}=\{A\in S_+^n(\mathbb{R}): \det(A)=~1\}$, 
 see \cite{[Blanc et al. 2020]}.

Observe that $F$ is an infimum of linear functions, which are continuous; therefore, it is upper semi-continuous.
Even more, $F$ is concave and continuous in $\Gamma_\mathcal{A}^\circ$. We  assume that 
\begin{equation}\label{hyp.intro.semicont}
F\ \textnormal{is  lower semi-continuous in}\ \Gamma_\mathcal{A}\setminus \Gamma_\mathcal{A}^\circ.
\end{equation}
so that we have that
\begin{equation}\label{hyp.intro.cont}
F\ \textnormal{is continuous in}\ \Gamma_\A.
\end{equation}
Condition \eqref{hyp.intro.semicont}   is not satisfied in Example  ~\ref{remark.ejemplo2}  below and the mean-value formula fails.

To deal with operators of the form \eqref{eq.thm.main3.intro}, where the class of matrices $\mathcal{A}$ is unbounded, we  consider matrices $A$ such that $A\in\A$ and
$A\leq \phi(\varepsilon)I$ with $\phi(\eps)$ a positive function such that 
\begin{equation}
\label{hypothesis.phi.intro}
 \lim_{\varepsilon\to0} \phi(\varepsilon)=+\infty
 \qquad
  \textrm{and}
 \qquad
 \lim_{\varepsilon\to0} \varepsilon\,\phi(\varepsilon) =0
\end{equation}
(an example of such a function is  $\phi(\varepsilon)=\varepsilon^{-\alpha}$ for $0<\alpha<1$).
Notice that the condition $A\leq \phi(\varepsilon)I$ becomes less restrictive as $\varepsilon\to0$ but  is still enough to   make  the mean-value formula local, see Section \ref{sec.unboundd}. We obtain the following result.

\begin{theorem}
\label{thm.main3.intro}
Let $\phi(\varepsilon)$ 
be a positive function  that satisfies \eqref{hypothesis.phi.intro}, and
$F:S^n(\R)\to \R\cup\{-\infty\}$ an operator of the form \eqref{eq.thm.main3.intro} that satisfies \eqref{hyp.intro.semicont}.
Let the function
$u\in C^2(\Omega)$ be $\mathcal{A}$-admissible.
Then, for every $x\in\Omega$ we have 
\begin{equation}
\label{eq.thm.main3.intro.MVP}
F(D^2u (x))
=
2(n+2)
\mathop{\mathop{\inf}_{A\in\A}}_
{A\leq \phi(\varepsilon)I}\,
\dashint_{B_{\varepsilon}(0)}
\frac{u(x+Ay)-u(x)}{\varepsilon^2}
\,dy
+
o(1),
\qquad\textrm{as $\eps\to0$.}
\end{equation}
\end{theorem}

As a consequence, we can characterize viscosity solutions to the equation $F(D^2u (x)) =f(x)$ in terms of an asymptotic mean-value formula in the viscosity sense. 
For the precise definition of the notion of viscosity solutions and viscosity mean-value formulas, see Section \ref{sect-prelim} below, \cite{CIL}, and \cite{[Manfredi et al. 2010]}. 
Informally, an equation or a mean-value property
hold in the viscosity sense  when they hold with an appropriate inequality (instead of an equality) for smooth test functions that touch $u$ from above or below at $x$. 

The concept of a mean-value formula in the viscosity sense is weaker than a mean-value formula that holds pointwise.
For the infinity Laplacian and the Monge-Amp\`ere equation, there are instances of asymptotic mean-value formulas that hold in the viscosity sense but do not hold pointwise, see \cite{{[Manfredi et al. 2010]}} and \cite{[Blanc et al. 2020]}, respectively.
An interesting question is, under what circumstances do the viscosity mean-value formulas outlined above hold pointwise. 
This question has an obvious affirmative answer for all equations for which viscosity solutions are known to be classical since, in that case, we can apply the pointwise formula for regular functions  in \eqref{eq.thm.main3.intro.MVP}. %Some examples are convex/concave uniformly elliptic  fully nonlinear equations (as a consequence of Evans-Krylov estimates) but also some Isaacs equations, which are neither convex nor concave, \cite{CC.2003}, which we will cover next. 
Nevertheless, there are examples of non-classical viscosity solutions for which  mean-value formulas hold pointwise; see \cite{Angel.Arroyo.Tesis,[Blanc et al. 2020],LindqvistManfredi}. 
\begin{theorem} \label{th.sol.viscosa.intro.1} 
Let $\phi(\varepsilon)$ 
be a positive function  that satisfies \eqref{hypothesis.phi.intro}.
Consider $f\in C(\Omega)$ and
  $F:S^n(\R)\to \R\cup\{-\infty\}$ defined as in \eqref{eq.thm.main3.intro} that   satisfies \eqref{hyp.intro.semicont}.
Then, a function $u\in C(\Omega)$ is a viscosity subsolution (respectively, supersolution) of 
\[
F(D^2u(x)) =f(x) \quad \mbox{in } \Omega,
\]
if and only if
\[
u(x)
\leq
\mathop{\mathop{\inf}_{A\in\A}}_
{A\leq \phi(\varepsilon)I}
\dashint_{B_{\varepsilon}(0)}
u(x+Ay)
\,dy
-
\frac{\varepsilon^2}{2(n+2)}
f(x)
+
o(\varepsilon^2),
\qquad\textrm{as $\eps\to0$}
\]
(respectively, $\geq$)
 in the viscosity sense.
\end{theorem}

\begin{remark} {\rm 
 Analogous results hold for supremum operators 
$$\displaystyle
F(D^2u (x))=\sup_{A\in\A} \tr(A^tD^2u(x)A)$$
 and operators with a combination of an infimum and a supremum. }
\end{remark}

A fundamental example of application of Theorems~\ref{thm.main3.intro} and \ref{th.sol.viscosa.intro.1}  are the $k$-Hessian operators,  given by the elementary  symmetric polynomials
\[
\sigma_k(\lambda_1,\dots,\lambda_n)=\sum_{1\leq i_1< i_2<\cdots<i_k\leq n}\lambda_{i_1}\lambda_{i_2}\dots\lambda_{i_k}
\]
evaluated in the eigenvalues of the Hessian, $\big\{\lambda_i (D^2u)\big\}_{1\leq i \leq n}$. Here,   $k=1,2,\ldots,n$ and the cases $k=1$ and $k=n$ correspond to the Laplacian and  Monge-Amp\`ere, respectively.
For these operators to fit our framework we need to write them in the  form 
\[%\begin{equation}\label{k.hessian.rewritten0}
 F_k(D^2u (x))= k\, \sigma_k\Big(\lambda_1(D^2u(x)),\ldots,\lambda_n(D^2u(x))\Big)^{\frac{1}{k}},
\]%\end{equation}
according to the following characterization, which we prove in Section \ref{section.k.hess}.
\begin{lemma}\label{def.k2}
Let $k\geq2$ and
\[
\Gamma_k=\Big\{\lambda\in\R^n: \sigma_j(\lambda)> 0\ \text{for all}\ j=1,\dots,k \Big\}.
\]
Then, for every $M\in S^n(\R)$ we have
\begin{equation}
\label{eq.main.k.99}
\inf_{A\in\mathcal{A}_k}\tr(A^tMA)
=
\left\{
\begin{aligned}
&k\, \sigma_k(\lambda(M))^{\frac{1}{k}}
 & &\textrm{if}\ M\in \overline \Gamma_k,\\
& -\infty& &\textrm{otherwise},
\end{aligned}
\right.
\end{equation}
where
\[
\A_k=
\Big\{
A\in S_+^n(\mathbb{R}) : \ \lambda_i^2(A)=\sigma_{k-1,i}(\gamma) \text{ with } \gamma=(\gamma_1,\dots,\gamma_n)\in  \Gamma_k \text{ and } \sigma_k(\gamma)=1
\Big\},
\]
and
$\sigma_{k-1,i}(\gamma_1,\dots,\gamma_n)=\sigma_{k-1}(\gamma_1,\dots,\gamma_{i-1},0,\gamma_{i+1},\dots,\gamma_n).$
\end{lemma}

An important consequence of Lemma \ref{def.k2} is that
for the $k$-Hessian operators,  $\mathcal{A}$-admissibility is equivalent to the notion of $k$-convexity in \cite{Trudinger.Wang.1997,Trudinger.Wang.1999,Trudinger.Wang.2002}, see  Corollary \ref{coro.GammaA=Gammak}. This means that
\[
\Gamma_{\A_k}=\big\{M\in S^{n}(\mathbb{R}): F(M)>-\infty \big\}=\big\{M\in S^{n}(\mathbb{R}):F(M)\geq 0\big\}
=\overline{\Gamma}_k
\]
and
\begin{equation}
\label{hypo.F=0.intro}
F\equiv 0 \quad \mbox{on }\partial\Gamma_{\A_k}.
\end{equation}
A condition such as \eqref{hypo.F=0.intro} could be used in place of \eqref{hyp.intro.semicont} to prove the mean-value property. In fact,
 \eqref{hypo.F=0.intro} may  even appear more natural at first; for instance, it was used in \cite{HL1} 
in relation to the existence and uniqueness of solutions 
to general fully nonlinear second-order PDEs. 
However, condition  \eqref{hypo.F=0.intro} implies \eqref{hyp.intro.semicont}, but not vice versa (see Remark \ref{remark.F.geq.C} and Example \ref{remark.ejemplo1} below), making condition 
\eqref{hyp.intro.semicont} more general.

In this case Theorem \ref{thm.main3.intro} reads as follows, see Section \ref{section.k.hess} for details.

\begin{theorem}
\label{thm.k-hessians}
Let  $\phi(\varepsilon)$ 
be a positive function  that satisfies \eqref{hypothesis.phi.intro}  and   
assume that $u\in C^2(\Omega)$ is $k$-convex, that is, $\sigma_j(\lambda(D^2 u(x))) \geq  0$ for all $ j=1,\dots,k$, 
for every $x\in\Omega$. 
Then, for every $x\in\Omega$ we have 
\begin{equation}
\label{eq.thm.main3.intro.MVP.999}
u(x)
=
\mathop{\mathop{\inf}_{A\in\A_k}}_
{A\leq \phi(\varepsilon)I}
\dashint_{B_{\varepsilon}(0)}
u(x+Ay)
\,dy
-
\frac{\varepsilon^2}{2(n+2)}
k (\sigma_k(D^2u (x)))^{\frac1k}
+
o(\varepsilon^2), \qquad\textnormal{as $\eps\to0$,}
\end{equation}
with a corresponding viscosity characterization as in Theorem \ref{th.sol.viscosa.intro.1}.
\end{theorem}

%  It is straightforward to show a corresponding result characterizing viscosity solutions to the equation $F_k(D^2u(x)) =f(x)$ in $\Omega$ 
%by an asymptotic mean-value formula
%in the viscosity sense. Because it is completely analogous to Theorem \ref{th.sol.viscosa.intro} above, we will not state it explicitly.

 It is also possible to consider operators where the class of matrices $\A$ depends on the point $x$. This happens naturally when we consider mean-value formulas for Monge-Amp\`ere operators in the Heisenberg group in Section \ref{section.heisenberg}.
In that case, the sets $\A$ depend on the point $x$ and are unbounded.

We also obtain a  rich family of examples when considering operators for which $\A_x\subset S_+^n(\mathbb{R})$ is bounded for each $x\in\Omega$. In fact, we consider  sup-inf operators where for every $x$, the supremum is taken over a subset $\mathbb{A}_x$ of $\mathcal P (S_+^n(\mathbb{R}))$, the power set of $S_+^n(\mathbb{R})$. This is not done only for the sake of generality. It is motivated by examples that cannot be covered otherwise, such as prescribed eigenvalues of the Hessian and Isaacs operators. These operators can be degenerate elliptic since we do not impose any lower bounds on the eigenvalues of the matrices
$A\in\mathcal{A}_x$. We provide a list of examples  below which shows the flexibility of the approach.

Therefore, let us now consider differential operators of the form $F\big(x,D^2u(x)\big)$ with $F:\Omega \times S^n(\R)\to \R$ given by
\begin{equation}\label{isasc.pablo.intro}
F(x,M)=\sup_{\mathcal{A}\in\mathbb{A}_x} \inf_{A\in \mathcal{A}} \textrm{trace}(A^tMA).
\end{equation}
Here,
 $\mathbb{A}_x\subset \mathcal P (S_+^n(\mathbb{R}))$ (the power set of $S_+^n(\mathbb{R})$) is a non-empty subset for each $x\in\mathbb{R}^n$ and we assume that
%the union of all these sets,
\begin{equation}\label{union-bounded}
\bigcup\mathbb{A}_x
=
\Big\{
A\in S_+^n(\mathbb{R})\ : \
A\in \mathcal{A}\ \textrm{for some}\ \mathcal{A}\in\mathbb{A}_x
\Big\}
\quad\textrm{is bounded.}
\end{equation}
Observe that if $\mathbb{A}_x$  contains only one element 
 $\A_x$ for each $x$, then 
\eqref{isasc.pablo.intro} is equivalent to
\begin{equation}\label{operator.bounded.A}
F(x,M)=\inf_{A\in\A_x} \tr(A^tMA),
\end{equation}
with  $\A_x\subset S_+^n(\mathbb{R})$  bounded  for each $x\in\mathbb{R}^n$.
On the other hand, if  every $\mathbb{A}_x$ is a set of singletons, \eqref{isasc.pablo.intro} is equivalent to
\begin{equation}\label{operator.bounded.A.sup}
F(x,M)=\sup_{A\in\A_x} \tr(A^tMA),
\end{equation}
where $\A_x=
\{
A\in S_+^n(\mathbb{R}) : 
A\in \mathcal{A}\ \textrm{for some}\ \mathcal{A}\in\mathbb{A}_x
\}$. One can also consider  inf-sup operators of the form 
\begin{equation}
	\label{isasc.pablo.intro.interchanged}
F(x,M)=\inf_{\mathcal{A}\in\mathbb{A}_x} \sup_{A\in \mathcal{A}} \textnormal{trace}(A^tMA)
\end{equation}
with straightforward adaptations in the statements and proofs.

 By Lemma \ref{Axbounded},  \eqref{union-bounded} is  equivalent to  the operator $M\mapsto F(x,M)$ being  well-defined and finite for every  $M\in S^n(\R)$; in particular, % $\Gamma_{\A_x}\equiv S^{n}(\mathbb{R})$ and 
 every $u \in C^2(\Omega)$ is admissible. Moreover, we prove in Lemma \ref{Lip-cond} that % if $\A_x\subset S_+^n(\mathbb{R})$  is bounded  for each $x\in\Omega$, then 
$M\mapsto F(x,M)$ is Lipschitz continuous for every $x\in\Omega$.

We obtain the following counterparts of Theorems \ref{thm.main3.intro} and \ref{th.sol.viscosa.intro.1}. Observe that 
when $\cup\mathbb{A}_x$ is  bounded, the condition    $A\leq \phi(\varepsilon)I$ in \eqref{eq.thm.main3.intro.MVP} becomes unnecessary because  it is satisfied at every point for $\varepsilon$ small enough.

\begin{theorem}
\label{thm.main1}
Consider $u\in C^2(\Omega)$ and  let
$F:\Omega\times S^n(\R)\to \R$ be an operator of the form \eqref{isasc.pablo.intro} that satisfies \eqref{union-bounded}. Then, for every $x\in\Omega$ we have 
\begin{equation}\label{intro.MVP.sup.inf.C2}
F\big(x,D^2u (x)\big)
=
2(n+2)
\sup_{\mathcal{A}\in\mathbb{A}_x} \inf_{A\in \mathcal{A}}\;
\dashint_{B_{\varepsilon}(0)}
\frac{u(x+Ay)-u(x)}{\varepsilon^2}
\;dy
+
o(1),
\quad
\textrm{as $\eps\to0$.}
\end{equation}
\end{theorem}

We have a corresponding result in the line of Theorem \ref{th.sol.viscosa.intro.1}, characterizing viscosity solutions to the equation $F\big(x,D^2u (x)\big) =f(x)$ 
by an asymptotic mean-value formula
in the viscosity sense.

\begin{theorem} \label{th.sol.viscosa.intro.isaacs} 
Consider $f\in C(\Omega)$ and
  $F: S^n(\R)\to \R$ defined as in \eqref{isasc.pablo.intro} that   satisfies \eqref{union-bounded}.
Then, a function $u\in C(\Omega)$ is a viscosity subsolution (respectively, supersolution) of 
\[
F(D^2u(x)) =f(x) \quad \mbox{in } \Omega,
\]
if and only if
\[
u(x)
\leq
\sup_{\mathcal{A}\in\mathbb{A}_x} \inf_{A\in \mathcal{A}}\;
\dashint_{B_{\varepsilon}(0)}
u(x+Ay)
\,dy
-
\frac{\varepsilon^2}{2(n+2)}
f(x)
+
o(\varepsilon^2),
\quad\textrm{as $\eps\to0$}
\]
(respectively, $\geq$)
 in the viscosity sense.
\end{theorem}

Let us now provide some examples of operators of the form  \eqref{isasc.pablo.intro}, \eqref{operator.bounded.A}, and \eqref{operator.bounded.A.sup} 
and the corresponding sets $\mathbb{A}_x$ and $\A_x$. In the sequel, we consider the eigenvalues of a matrix $M \in S^n(\mathbb{R})$ arranged in increasing order, that is, 
 $\lambda_1(M)\leq \lambda_2(M)\leq \cdots \leq \lambda_n(M)$. 

\begin{enumerate}%\itemsep2pt

\item Operators of the form \eqref{isasc.pablo.intro}, \eqref{isasc.pablo.intro.interchanged} include the usual Isaacs operators (see \cite{CC}) defined as
\[%\begin{equation}\label{normal.Isaacs}
F(x,M)=\sup_{\alpha\in \mathcal A} \inf_{\beta\in \mathcal B} \textnormal{trace}\left(A_{\alpha\beta}^tMA_{\alpha\beta}\right)
\qquad\textnormal{and}\qquad
F(x,M)=\inf_{\alpha\in \mathcal A} \sup_{\beta\in \mathcal B} \textnormal{trace}\left(A_{\alpha\beta}^tMA_{\alpha\beta}\right)
\]%end{equation}
for a given family of matrices $\{A_{\alpha\beta}\}_{\alpha\in\mathcal A, \beta\in \mathcal B}$ (and similarly in the inf-sup case). Here we take $\mathbb{A}=\big\{\{A_{\alpha\beta}\}_{\beta\in \mathcal B}\big\}_{\alpha\in\mathcal A}$ in \eqref{isasc.pablo.intro}, with $\mathbb{A}$ independent of the point   $x$. In fact, operators of the form \eqref{isasc.pablo.intro} could be seen as Issacs operators, but %there is no natural choice for $\mathcal B$ and therefore
 the usual Isaacs condition 
\[
\inf_{\alpha\in\mathcal A} \sup_{\beta\in\mathcal B}  \textnormal{trace}\left(A_{\alpha\beta}^tMA_{\alpha\beta}\right)=\sup_{\beta\in\mathcal B} \inf_{\alpha\in\mathcal A}  \textnormal{trace}\left(A_{\alpha\beta}^tMA_{\alpha\beta}\right)
\]
does not seem to have a clear counterpart.
A typical requirement in the literature is that  the family $\{A_{\alpha\beta}\}_{\alpha\in\mathcal A, \beta\in \mathcal B}$ is  uniformly elliptic, i.e., it is uniformly bounded between two constants $0<\theta<\Theta$. In this case, the resulting  Issacs operator is uniformly elliptic. We want to emphasize that here we are only requiring $\cup\mathbb{A}_x$ to be bounded and, therefore,  the operators  \eqref{isasc.pablo.intro}  may be degenerate elliptic.\medskip

\item We  consider  the truncated Laplacians \cite{[Birindeli et al. 2020]}, 
 defined as
\[%\begin{equation}\label{def1}
\mathcal{P}^-_{k}(D^2u)=\sum_{i=1}^k\lambda_i(D^2u) \qquad\text{and}\qquad \mathcal{P}^+_{k}(D^2u)=\sum_{i=1}^k\lambda_{n+1-i}(D^2u),
\]%end{equation}
for  $k=1,2,\ldots,n-1$ (for $k=n$ these operators coincide with the Laplacian). These degenerate operators
 appear naturally in geometric problems, when considering manifolds of partially positive curvature \cite{Sha,Wu}, or mean curvature flow in arbitrary codimension \cite{AS}, see also  \cite{HL1,HL2,CLN1,[Birindeli et al. 2020]}  and the references therein.  The operators $\mathcal{P}^-_{k}$ and  $\mathcal{P}^+_{k}$ are of the form \eqref{operator.bounded.A} and \eqref{operator.bounded.A.sup}, respectively,  for the set
\[
\mathcal{A}=\big\{A\in S_+^n(\mathbb{R}): \lambda_1=\cdots=\lambda_{n-k}=0 \text{ and } \lambda_{n-k+1}=\cdots=\lambda_n=1\big\}.
\]

\item Operators of the form \eqref{isasc.pablo.intro} allow us to consider degenerate operators such as the $k$-th smallest eigenvalue of the Hessian, given by the  Courant--Fischer min-max principle
\begin{equation} \label{Courant-Fisher}
\lambda_{k}\big(D^2u(x)\big)= \max_V \left\{ \min_{v \in V,\ |v|=1}  \langle D^2u(x)v,v\rangle    :  V\subset\mathbb{R}^n\ \text{subspace},\ \textrm{dim}(V)=n-k+1 \right\}.
\end{equation}
We can write the operator $\lambda_k(D^2u)$ in the form
\eqref{isasc.pablo.intro} for the set
\[
\begin{split}
 \mathbb{A}=\displaystyle \Big\{\big\{A\in S_+^n(\mathbb{R}): \lambda_i(A)=0 \text{ for } i\neq n, \lambda_n(A)=1, \text{ and } v_n\in V\big\} &
:\\
 V\subset\mathbb{R}^n \text{ is a subspace of dimension }& n-k+1\Big\}, 
\end{split}
\]
where $v_n$ is the eigenvector corresponding to $\lambda_n(A)$. 
The cases $k=1$ and $k=n$ were 
 studied in \cite{BR, HL1, OS} in connection with the convex and concave envelope of a function; i.e.,  the unique viscosity solutions of $\lambda_1\big(D^2u(x)\big)=0$ and $\lambda_n\big(D^2u(x)\big) = 0$ are, respectively, the convex and concave envelopes of $u|_{\partial\Omega}$ in $\Omega$.
 These operators are of the form \eqref{operator.bounded.A}, \eqref{operator.bounded.A.sup} 
with the set of matrices
 $$\mathcal{A}=\Big\{A\in S_+^n(\mathbb{R}): \lambda_1(A)=\cdots=\lambda_{n-1}(A)=0 \text{ and } \lambda_n(A)=1\Big\}.$$

\item When the mean-value formula involves averages over balls that are not centered at $0$ but at 
$\varepsilon^2 v$ with $|v|=1$, we obtain operators with first-order terms. For example, we have 
\[
\inf_{A\in\A_x}\; 
\dashint_{B_{\varepsilon} (\varepsilon^2 v)}
u(x+Ay)
\,dy
-
u(x)
= \varepsilon^2 
\inf_{A\in\A_x} \left\{\frac{1}{2(n+2)} \tr(A^tD^2u(x)A)
+ \langle Du(x), A v \rangle \right\}
+ o(\varepsilon^2),
\]
as $\eps\to0$.
%\[
%\begin{split}
%\inf_{A\in\A_x}\; &
%\dashint_{B_{\varepsilon} (\varepsilon^2 v)}
%u(x+Ay)
%\,dy
%-
%u(x)
%\\
%&= \varepsilon^2 
%\inf_{A\in\A_x} \left\{\frac{1}{2(n+2)} \tr(A^tD^2u(x)A)
%+ \langle Du(x), A v \rangle \right\}
%+ o(\varepsilon^2),\quad \textnormal{as $\eps\to0$.}
%\end{split}
%\]
We can also look for zero-order terms and consider mean-value properties like
\[
\inf_{A\in\A_x}
(1-\alpha \varepsilon^2)\, \dashint_{B_{\varepsilon} (0)}
u(x+Ay)
\,dy
- u(x)
= \varepsilon^2 \left\{
\frac{1}{2(n+2)}
\inf_{A\in\A_x} \tr(A^tD^2u(x)A)
- \alpha u(x) \right\}
+ o(\varepsilon^2),
\]
as $\eps\to0$.
%\[
%\begin{split}
%\inf_{A\in\A_x}&
%(1-\alpha \varepsilon^2)\, \dashint_{B_{\varepsilon} (0)}
%u(x+Ay)
%\,dy
%- u(x)
%\\
%&= \varepsilon^2 \left\{
%\frac{1}{2(n+2)}
%\inf_{A\in\A_x} \tr(A^tD^2u(x)A)
%- \alpha u(x) \right\}
%+ o(\varepsilon^2),\quad\textnormal{as $\eps\to0$.}
%\end{split}
%\]
\item Finally, we consider extremal  Pucci operators for given ellipticity constants
 $0<\theta<\Theta$, defined as
\[
\mathcal{M}^-_{\theta,\Theta}(D^2u)=\theta\sum_{\lambda_i(D^2u)>0}\lambda_i(D^2u)+\Theta\sum_{\lambda_i(D^2u)<0}\lambda_i(D^2u)
\]
and
\[
\mathcal{M}^+_{\theta,\Theta}(D^2u)=\Theta\sum_{\lambda_i(D^2u)>0}\lambda_i(D^2u)+\theta\sum_{\lambda_i(D^2u)<0}\lambda_i(D^2u).
\]
These operators are of the form \eqref{operator.bounded.A}, \eqref{operator.bounded.A.sup} with
\[
\mathcal{A}_{\theta\Theta}=\left\{A\in S_+^n(\mathbb{R}): \sqrt\theta \leq \lambda_i(A)\leq \sqrt\Theta\right\},
\]
which is bounded uniformly in $x$. %In fact, one can write
%$$
%\mathcal{M}^-_{\theta,\Theta}(M)=\inf_{A\in\A} \tr(A^tMA)
%\qquad
%\textrm{and}
%\qquad
%\mathcal{M}^+_{\theta,\Theta}(M)=\sup_{A\in\A} \tr(A^tMA).
%$$
From the definition of $  \mathcal{A}_{\theta\Theta}$, it is clear that $\mathcal{M}^\pm_{\theta,\Theta}$ are uniformly elliptic operators.
We apply similar ideas to general  fully nonlinear uniformly elliptic operators $F\big(x,D^2u(x)\big)$ (with ellipticity constants $0<\theta\leq\Theta$) by means of the characterization
\[
F(x,M)=\underset{N\in S^n(\R)}{\sup}\inf_{A\in\mathcal{A}_{\theta\Theta}}\Big\{\textnormal{trace}\left(A^tMA\right)+F(x,N)-\textnormal{trace}\left(A^tNA\right)\Big\},
\]
see  Lemma \ref{lemma.every.unif.elliptic.is.Isaacs}.
% From there, we obtain  the following formula
%\[
%\begin{split}
%& F\big(x,D^2u(x)\big)\\
%& =
%\frac{2(n+2)}{\varepsilon^2}
%\underset{N\in S^n(\mathbb{R})}{\inf}
%\sup_{A\in\A}
%\left(
%\dashint_{B_{\varepsilon}(0)}
%\big(
%u(x+Ay)-u(x)
%\big)
%\,dy
%+F(x,N)-\textnormal{trace}\left(A^tNA\right)
%\right)
%+
%o(1),
%\end{split}
%\]
%as $\eps\to0$, which yields a mean-value formula in the viscosity sense. 
%It is interesting that in this formula  $F$ is merely evaluated at constant matrices $N$ in the ``coupling term". Because the formula does not require linearization or differentiation of $F$, it holds for a general uniformly elliptic  equations. 
%%It is worth noting that in general, viscosity solutions to uniformly elliptic equations that are neither concave nor convex are only $C^{1,\alpha}$ and not necessarily $C^2$, see \cite{CC}.
%Observe that for some operators, such as the Pucci and Isaacs operators, we can obtain  different types of mean-value formulas.

\end{enumerate}

The paper is organized as follows: In Section  \ref{sect-prelim} we gather some definitions and preliminary results
and in Section  \ref{sec.infimums} we deal with concave and Isaacs  operators, when the sets of coefficients are bounded. These include general uniformly elliptic operators and operators including lower-order terms. 
 We consider  bounded coefficients first for the sake of clarity, to establish the techniques and ideas before we dive into the unbounded case in Section  \ref{sec.unboundd}. In  Section  \ref{section.k.hess} we study the $k$-Hessians.  Finally, 
Section  \ref{section.heisenberg} is devoted to examples in the Heisenberg group, 
where the class $\mathcal{A}_x$ is unbounded and naturally depends on $x$. 
\

\section{Preliminaries} \label{sect-prelim}

We begin by stating the definition of a viscosity solution to a fully nonlinear, second-order, elliptic PDE.
We refer to
\cite{CIL} for general results on viscosity solutions.
Given a continuous function
\[
\mathcal{F}:\Omega\times\R\times\R^n\times
S^n(\mathbb{R})\to\R,
\]
we  consider the PDE 
\begin{equation}
\label{eqvissol}
\mathcal{F}(x,u (x), Du (x), D^2u (x)) =0, \qquad x \in \Omega.
\end{equation}
Viscosity solutions  use the monotonicity of $\mathcal{F}$ in $D^2u$ (ellipticity) in order to 
``pass derivatives to smooth test functions''. 

\begin{definition}
\label{def.sol.viscosa.1}
A lower semi-continuous function $ u $ is a viscosity
supersolution of \eqref{eqvissol} if for every $ \phi \in C^2$ such that $ \phi $
touches $u$ at $x \in \Omega$ strictly from below (that is, $u-\phi$ has a strict minimum at $x$ with $u(x) = \phi(x)$), we have
$$\mathcal{F} (x,\phi(x),D \phi(x),D^2\phi(x))\leq 0.$$

An upper semi-continuous function $u$ is a subsolution of \eqref{eqvissol} if
for every $ \phi \in C^2$ such that $\phi$ touches $u$ at $ x \in
\Omega$ strictly from above (that is, $u-\phi$ has a strict maximum at $x$ with $u(x) = \phi(x)$), we have
$$\mathcal{F}(x,\phi(x),D \phi(x),D^2\phi(x))\geq 0.$$

Finally, $u$ is a viscosity solution of \eqref{eqvissol} if it is both a super- and a subsolution.
\end{definition}

We will also need the definition of an asymptotic mean-value formula in the viscosity sense.
In the next definition, $M (u, \varepsilon ) (x)$ stands for a mean-value operator (that depends on the parameter
$\varepsilon$) applied to $u$ at the point $x$. For example, we can take 
$$
M (u, \varepsilon ) (x) = \inf_{A\in\A_x}
\dashint_{B_{\varepsilon}(0)}
u(x+Ay)
\,dy
-\frac{\varepsilon^2}{2(n+2)}
f(x)
$$
as in Theorem \ref{th.sol.viscosa.intro.1} and the next section. 

\begin{definition} \label{def.sol.viscosa.asymp}
A continuous function $u$ verifies
$$
u(x) \geq M (u, \varepsilon ) (x) + o(\varepsilon^2), \quad \mbox{as }\varepsilon \to 0,
$$
\emph{in the viscosity sense} if for every $ \phi\in C^{2}$ such that $ u-\phi $ has a strict
minimum at the point $x \in \overline \Omega$  with $u(x)=
\phi(x)$ (i.e., $\phi $
touches $u$ at $x \in \Omega$ strictly from below), we have
$$
\phi (x) \geq   M (\phi, \varepsilon ) (x) + o(\varepsilon^2).
$$
Similarly,
a continuous function  $u$  verifies
$$
u(x) \leq M (u, \varepsilon ) (x) + o(\varepsilon^2), \quad \mbox{as }\varepsilon \to 0,
$$
\emph{in the viscosity sense} if
for every $ \phi \in C^{2}$ such that $ u-\phi $ has a
strict maximum at the point $ x \in \overline{\Omega}$ with $u(x)=
\phi(x)$ (i.e., $\phi $
touches $u$ at $x \in \Omega$ strictly from above), we have
$$
\phi (x) \leq  M (\phi, \varepsilon ) (x)
 + o(\varepsilon^2).
$$
\end{definition}

Next, we include a lemma that is related to the simplest mean-value property 
for the usual Laplacian. A proof can be found in \cite{[Blanc et al. 2020]}.

\begin{lemma}\label{lemma.trace.integral}
	Let $M$ be a square matrix of dimension $n$. Then,
\[
		{\rm trace}(M)
		=
		\frac{n}{\varepsilon^2}
		\dashint_{\partial B_\varepsilon (0)}
		\langle My,y\rangle
		\,d\mathcal{H}^{n-1}(y)=
		\frac{n+2}{\varepsilon^2}
		\dashint_{B_\varepsilon (0)}
		\langle My,y\rangle
		\,dy.
\]
\end{lemma}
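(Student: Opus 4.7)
The plan is to reduce both identities to the computation of the second moments $\int y_iy_j$ over $B_\varepsilon(0)$ and $\partial B_\varepsilon(0)$, and then exploit the rotational symmetry of the domain to evaluate them.

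First I would expand the quadratic form componentwise: writing $M=(M_{ij})_{i,j=1}^n$, we have $\langle My,y\rangle=\sum_{i,j=1}^n M_{ij}y_iy_j$, so that on either the ball or the sphere
\[
\dashint \langle My,y\rangle = \sum_{i,j=1}^n M_{ij}\, \dashint y_iy_j.
\]
For $i\neq j$, the function $y\mapsto y_iy_j$ is odd under the reflection $y_i\mapsto -y_i$, which is a symmetry of $B_\varepsilon(0)$ and of $\partial B_\varepsilon(0)$, so its integral vanishes. For $i=j$, applying a rotation that permutes two coordinate axes shows that $\dashint y_i^2$ is independent of $i$, so each equals $\tfrac{1}{n}\dashint |y|^2$. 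Consequently,
\[
\dashint \langle My,y\rangle
= \frac{1}{n}\Big(\sum_{i=1}^n M_{ii}\Big) \dashint |y|^2
= \frac{\operatorname{trace}(M)}{n}\, \dashint |y|^2
\]
in each of the two contexts.

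It remains to evaluate the two averages of $|y|^2$. On the sphere, $|y|^2\equiv \varepsilon^2$, so $\dashint_{\partial B_\varepsilon(0)} |y|^2\,d\mathcal{H}^{n-1}=\varepsilon^2$, which yields
\[
\dashint_{\partial B_\varepsilon(0)} \langle My,y\rangle\,d\mathcal{H}^{n-1}
= \frac{\varepsilon^2}{n}\,\operatorname{trace}(M),
\]
and rearranging gives the first identity. In the ball, I would use polar coordinates to compute
\[
\int_{B_\varepsilon(0)} |y|^2 \,dy = \mathcal{H}^{n-1}(\partial B_1)\int_0^\varepsilon r^{n+1}\,dr = \frac{\mathcal{H}^{n-1}(\partial B_1)}{n+2}\,\varepsilon^{n+2},
\qquad
|B_\varepsilon(0)| = \frac{\mathcal{H}^{n-1}(\partial B_1)}{n}\,\varepsilon^n,
\]
so that $\dashint_{B_\varepsilon(0)} |y|^2\,dy = \tfrac{n}{n+2}\varepsilon^2$. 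Substituting above gives $\dashint_{B_\varepsilon(0)}\langle My,y\rangle\,dy = \tfrac{\varepsilon^2}{n+2}\operatorname{trace}(M)$, and rearranging yields the second identity.

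There is no real obstacle here; the only point to keep in mind is that $M$ need not be symmetric, but since $\langle My,y\rangle = \langle \tfrac{1}{2}(M+M^T)y,y\rangle$ and $\operatorname{trace}(M)=\operatorname{trace}(\tfrac{1}{2}(M+M^T))$, no modification is needed. Alternatively, one can bypass the symmetry discussion entirely by noting that the vanishing of the off-diagonal integrals and the equality of the diagonal ones make the symmetric/antisymmetric decomposition transparent in the componentwise computation above.
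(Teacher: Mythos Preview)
Your proof is correct and is the standard symmetry argument for this identity. The paper itself does not give a proof of this lemma; it only states the result and refers the reader to \cite{[Blanc et al. 2020]} for a proof, so there is no in-paper argument to compare against. Your computation is exactly what one expects that reference to contain.
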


\begin{remark} {\rm 
We will use the solid mean 
\[
		{\rm trace}(M)
		=
		\frac{n+2}{\varepsilon^2}
		\dashint_{B_\varepsilon (0)}
		\langle My,y\rangle
		\,dy,
\]
in our proofs of the mean-value formulas. 
However, if one uses the mean on spheres
\[
		{\rm trace}(M)
		=
		\frac{n}{\varepsilon^2}
		\dashint_{\partial B_\varepsilon (0)}
		\langle My,y\rangle
		\,d\mathcal{H}^{n-1}(y)
		\]
		one can obtain mean-value formulas of the type
\begin{equation}
\label{eq.thm.main.bounded.rem}
\inf_{A\in\A_x}
\dashint_{\partial B_{\varepsilon}(0)}
u(x+Ay)
\,dy
-
u(x)
=
\frac{\varepsilon^2}{2 n}
F\big(x,D^2u (x)\big)
+
o(\varepsilon^2),
\qquad\textrm{as $\eps\to0$,}
\end{equation}
for $u\in C^2(\Omega)$ and $F(x,D^2u)$ defined as in \eqref{operator.bounded.A} (and accordingly for the rest of the cases).}
\end{remark}

\section{Bounded Operators}\label{sec.infimums}

In this section we prove Theorems  ~\ref{thm.main1} and \ref{th.sol.viscosa.intro.isaacs}. We consider  differential operators  given by
\begin{equation}\label{isasc.pablo}
	F\big(x,D^2u (x)\big)=\sup_{\mathcal{A}\in\mathbb{A}_x} \inf_{A\in \mathcal{A}} \textrm{trace}(A^tD^2u(x)A),
\end{equation}
where $\mathbb{A}_x\subset \mathcal P (S_+^n(\mathbb{R}))$ (the power set of $S_+^n(\mathbb{R})$) is a non-empty subset for each $x\in\mathbb{R}^n$ and
\begin{equation}\label{assumption.Ax.is.bounded}
\bigcup\mathbb{A}_x
=
\Big\{
A\in S_+^n(\mathbb{R})\ : \
A\in \mathcal{A}\ \textrm{for some}\ \mathcal{A}\in\mathbb{A}_x
\Big\}\quad\textrm{ is bounded.}
\end{equation}
Due to \eqref{assumption.Ax.is.bounded}, the operator  $F(x,M)$ is finite for every $M\in  S^n(\R)$.

%\begin{remark}	\label{remark.local} {\rm
	Moreover, since the set of matrices $\cup\mathbb{A}_x$ is bounded, the mean-value formula \eqref{intro.MVP.sup.inf.C2} is local.
	In fact, for every $x\in\Omega$ and $y\in B_\varepsilon(0)$ there exists $C_x>0$ such that $A\leq C_x I$ for every $A\in\cup\mathbb{A}_x$. We get
	\[
	\textrm{dist}(x+Ay,x)=|Ay|\leq C_x\, \varepsilon \to 0
	\]
	as $\eps\to 0$.
	In particular, observe that for $\eps$ small enough $x+Ay\in\Omega$ for every $y\in B_\varepsilon(0)$
	and hence the integrals 
	$$
	\dashint_{B_{\varepsilon}(0)} u(x+Ay) \,dy
	$$ are well-defined for integrable functions $u:\Omega \to \mathbb{R}$.
%	}
%\end{remark}

\begin{remark}%\label{remark.polar.decomposition} 
{\rm
	We can  assume that the matrices $A\in\cup\mathbb{A}_x$ are symmetric and positive semi-definite without loss of generality.
	This is because, given $A\in M^{n\times n}(\mathbb{R})$, we can write its left polar decomposition $A = SQ$ with $Q$  orthogonal and $S$  positive semi-definite and symmetric. Then, for every $M\in M^{n\times n}(\mathbb{R})$, we have
	\[
	\tr(A^tMA)
	=\tr(Q^tS^tMSQ)=\tr(S^tMSQQ^t)=\tr(S^tMS)
	\]
	 and 
	\[
	\dashint_{B_{\varepsilon}(0)} u(x+Ay) \,dy
	= \dashint_{B_{\varepsilon}(0)} u(x+SQy) \,dy
	= \dashint_{B_{\varepsilon}(0)} u(x+Sz) \,dz.
	\]	
	}
\end{remark}

%This can be compared with the Monge-Amp\`ere case, where $$\mathcal{A}=\{A\in S_+^n(\mathbb{R}): \det(A)=~1\}$$ is unbounded and 
%\[
%\inf_{A\in\mathcal{A}} \textrm{trace}(A^tMA)
%=
%\left\{
%\begin{aligned}
%&n\left(
%\det{M}
%\right)^{1/n}
% & &\textrm{if}\ M\geq0\\
%& -\infty& &\textrm{otherwise},
%\end{aligned}
%\right.
%\]
% see \cite{[Blanc et al. 2020]} and Sections \ref{sec.unboundd} and \ref{section.k.hess} below.

%Therefore, since concave functions can be written as the infimum of affine functions, the class of operators given by \eqref{operator.bounded.A.in.section} coincides with the class of concave operators that are defined for every matrix.

%In the next lemma we handle the continuity of  $F(x,M)$ under perturbations on $M$.

In the next lemma we prove an explicit continuity estimate that we  use in the proof of Theorem \ref{thm.main1}.
\begin{lemma}
\label{Lip-cond}
Consider the differential operator $F\big(x,D^2u(x)\big)$ given by  \eqref{isasc.pablo}.
If $\cup\mathbb{A}_x$ is bounded, then the mapping $M\mapsto F(x,M)$ is Lipschitz continuous in $M$.
In particular,
\begin{equation}
\label{eq.hypo}
\sup_{\mathcal{A}\in\mathbb{A}_x} \inf_{A\in \mathcal{A}} 
\tr(A^t\left(M\pm\eta I\right)A)
\to
\sup_{\mathcal{A}\in\mathbb{A}_x} \inf_{A\in \mathcal{A}} 
\tr (A^tMA)
\end{equation}
as $\eta\to 0$, for every $M\in S^{n}(\R)$.
\end{lemma}

\begin{proof}
We fix $x\in \R^n$.
For every $A\in\cup\mathbb{A}_x$, there exists $C_x>0$ such that $A\leq C_x I$.
Given $M,N\in S^{n}(\R)$, we have 
\[
\begin{split}
&\left|
\sup_{\mathcal{A}\in\mathbb{A}_x} \inf_{A\in \mathcal{A}}  \tr(A^tMA)
-
\sup_{\mathcal{A}\in\mathbb{A}_x} \inf_{A\in \mathcal{A}}  \tr(A^tNA)
\right|
\\
&\quad\leq
\sup_{A\in\cup\mathbb{A}_x} |\tr(A^tMA)-\tr(A^tNA)|
\leq
\sup_{A\in\cup\mathbb{A}_x} |\tr(A^t(M-N)A)|
\leq
n\, C_x^2 \, \|M-N\|,
\end{split}
\]
and the result follows.
\end{proof}

We are now ready to prove Theorem \ref{thm.main1}.
\begin{proof}[Proof of Theorem \ref{thm.main1}]
Given $x\in\Omega$, let us consider the paraboloid 
\begin{equation}\label{definition.P.main}
P(z)=u(x)+\langle\nabla u(x),z-x\rangle+\frac12\langle D^2u(x)(z-x),(z-x)\rangle.
\end{equation}
 Since $u\in C^2(\Omega)$, we have
\[
u(z)-P(z)=o(|z-x|^2)\qquad\textrm{as}\ z\to x,
\]
which means that for every $\eta >0$, there exists $\delta>0$ such that for every $z\in B_\delta(x),$
\begin{equation} \label{u-P.o.pequena}
P(z)-\frac{\eta}{2}|z-x|^2\leq u(z)\leq P(z)+\frac{\eta}{2}|z-x|^2,
\end{equation}
 with equality only when $z=x$. For convenience, let us denote
\[
P_\eta^\pm(z)=P(z)\pm\frac{\eta}{2}|z-x|^2.
\]
Then,
\begin{equation}\label{proof.C2.case.aux.for.remark.in.intro}
\begin{split}
\dashint_{B_{\varepsilon}(0)}
&
\left(
P_\eta^\pm(x+Ay)-P_\eta^\pm(x)
\right)
\,dy 
=
\frac12\dashint_{B_{\varepsilon}(0)}
\left(
\big\langle A^tD^2u(x)Ay,y\big\rangle\pm\eta |Ay|^2
\right)
\,dy
\\
&=
\frac12
\dashint_{B_{\varepsilon}(0)}
\left\langle A^t\left(D^2u(x)\pm\eta I\right)Ay,y\right\rangle
\,dy 
=
\frac{\varepsilon^2}{2(n+2)}\,\textrm{trace}\left(A^t\left(D^2u(x)\pm\eta I\right)A\right),
\end{split}
\end{equation}
by Lemma \ref{lemma.trace.integral}.

On the other hand, since $\cup\mathbb{A}_x$ is bounded there exists $C_x>0$ such that $A\leq C_x I$ for every $A\in\cup\mathbb{A}_x$.
Then, $x+Ay\in B_\delta(x)$ for every $|y|\leq \varepsilon$ and $\varepsilon< \varepsilon_0$, where $\varepsilon_0 C_x\leq \delta$.
Therefore, by \eqref{u-P.o.pequena}, if $\varepsilon<\varepsilon_0$, then
 \begin{equation}\label{C2.case.second.part}
P_\eta^-(x+Ay)
\leq
u(x+Ay)
\leq
P_\eta^+(x+Ay)\qquad\textrm{for every}\ y\in B_\varepsilon.
\end{equation}
Then,
\[
\begin{split}
\frac{\varepsilon^2}{2(n+2)}\,&\textrm{trace}\left(A^t\left(D^2u(x)-\eta I\right)A\right)
 \\
&\leq
\dashint_{B_{\varepsilon}(0)}u(x+Ay)\,dy - u(x)
\leq 
\frac{\varepsilon^2}{2(n+2)}\,\textrm{trace}\left(A^t\left(D^2u(x)+\eta I\right)A\right)
\end{split}
\]
and the result follows by \eqref{eq.hypo}.
\end{proof}

We now prove Theorem \ref{th.sol.viscosa.intro.isaacs}.
%
%
%
%\begin{theorem} \label{th.sol.viscosa.intro} 
%Let $\phi(\varepsilon)$ 
%be a positive function  that satisfies \eqref{hypothesis.phi.intro}.
%Consider $f\in C(\Omega)$ and
%  $F: S^n(\R)\to \R$ defined as in \eqref{eq.thm.main3.intro} that   satisfies \eqref{hyp.intro.semicont}.
%Then, a function $u\in C(\Omega)$ is a viscosity subsolution (respectively, supersolution) of 
%\[
%F(D^2u(x)) =f(x) \quad \mbox{in } \Omega,
%\]
%if and only if
%\[
%u(x)
%\leq
%\mathop{\mathop{\inf}_{A\in\A}}_
%{A\leq \phi(\varepsilon)I}
%\dashint_{B_{\varepsilon}(0)}
%u(x+Ay)
%\,dy
%-
%\frac{\varepsilon^2}{2(n+2)}
%f(x)
%+
%o(\varepsilon^2),
%\qquad\textrm{as $\eps\to0$}
%\]
%(respectively, $\geq$)
% in the viscosity sense.
%\end{theorem}
%

\begin{proof}[Proof of Theorem \ref{th.sol.viscosa.intro.isaacs}]
First, assume that $u$ is a viscosity solution and take 
a $C^2$ function $\phi$ that
touches $u$ at $x \in \Omega$ strictly from below. Then, as $u-\phi$ has a strict minimum at $x$ with $u(x) = \phi(x)$, we have
$$
F\big(x,D^2\phi(x)\big)=\sup_{\mathcal{A}\in\mathbb{A}_x} \inf_{A\in \mathcal{A}}  \tr(A^tD^2\phi(x)A)\leq f(x).
$$
Now, since $\phi$ is $C^2$, Theorem \ref{thm.main1} gives
\[
\begin{split}
&\sup_{\mathcal{A}\in\mathbb{A}_x} \inf_{A\in \mathcal{A}} 
\dashint_{B_{\varepsilon}(0)}
\phi(x+Ay)
\,dy
-
u(x)+
o(\varepsilon^2)  =
\frac{\varepsilon^2}{2(n+2)}
%F\big(x,D^2\phi(x)\big)
%+
%o(\varepsilon^2) \\
\sup_{\mathcal{A}\in\mathbb{A}_x} \inf_{A\in \mathcal{A}}  \tr(A^tD^2\phi(x)A)
\leq 
\frac{\varepsilon^2}{2(n+2)}f(x),
\end{split}
\]
proving that $u$ satisfies 
$$
u(x)
\geq
\sup_{\mathcal{A}\in\mathbb{A}_x} \inf_{A\in \mathcal{A}} 
\dashint_{B_{\varepsilon}(0)}
u(x+Ay)
\,dy
-
\frac{\varepsilon^2}{2(n+2)}f(x)+
o(\varepsilon^2),
$$
in the viscosity sense.  An analogous computation reversing the inequalities shows that when a $C^2$ function $\phi$ 
touches $u$ at $x \in \Omega$ strictly from above, 
$u$ satisfies 
$$
u(x)
\leq
\sup_{\mathcal{A}\in\mathbb{A}_x} \inf_{A\in \mathcal{A}} 
\dashint_{B_{\varepsilon}(0)}
u(x+Ay)
\,dy
-
\frac{\varepsilon^2}{2(n+2)}f(x)+
o(\varepsilon^2)
$$
in the viscosity sense.

Now, assume that the mean-value property holds and take 
a $C^2$ function $\phi$  that 
touches $u$ at $x \in \Omega$ strictly from below. Then, as $u-\phi$ has a strict minimum at $x$ with $u(x) = \phi(x)$, 
 Theorem ~\ref{thm.main1} yields
\[
\frac{\varepsilon^2}{2(n+2)}
F\big(x,D^2\phi(x)\big)
+
o(\varepsilon^2)    
= 
\sup_{\mathcal{A}\in\mathbb{A}_x} \inf_{A\in \mathcal{A}} 
\dashint_{B_{\varepsilon}(0)}
\phi(x+Ay)
\,dy
-
u(x) \leq \frac{\varepsilon^2}{2(n+2)}f(x)+
o(\varepsilon^2).
\]
Dividing by $\varepsilon^2$ and
letting $\varepsilon \to 0$ we get that $u$ is a viscosity supersolution to 
\[
F(x,D^2 u (x))= f(x).
\]
A similar argument reversing the inequalities shows that $u$ is a viscosity subsolution.
\end{proof}

We devote the rest of the section to discuss the examples mentioned in the introduction  in more detail.

\subsection{Isaacs Operators}\label{sec.sup-inf}
As mentioned in the introduction, hypotheses
\eqref{isasc.pablo} and \eqref{assumption.Ax.is.bounded} also cover 
 degenerate operators such as the $k$-th smallest eigenvalue of the Hessian, given by the  Courant--Fischer min-max principle
\begin{equation} \label{Courant-Fisher}
\lambda_{k}\big(D^2u(x)\big)= \max_V \left\{ \min_{v \in V,\ |v|=1}  \langle D^2u(x)v,v\rangle \   : \ V\subset\mathbb{R}^n \text{ subspace of dimension}\ n-k+1 \right\}.
\end{equation}
To write this operator in the form \eqref{isasc.pablo}, let $A_v=v\otimes v$ and note that $A_v^2=A_v$ when $|v|=1$.
 With this notation we have $\tr(A_v^t M A_v)=\langle M v, v\rangle$. Let $G_{n-k+1}$ denote the set of all subspaces $V\subset\mathbb{R}^n$ of dimension $n-k+1$. For each $V\in G_{n-k+1}$, we set $\mathcal{A}_V=\{ v\otimes v \colon v\in V\text{ such that } |v|=1\}$.
Finally we define $\mathbb{A}_k=\{ \mathcal{A}_V\colon V\in G_{n-k+1}\}$, or equivalently,
\[
\displaystyle 
\mathbb{A}_k=\Big\{\big\{A\in S_+^n(\mathbb{R}): \lambda_i(A)=0 \text{ for } i\neq n, \lambda_n (A) =1, \text{ and } v_n\in V\big\} 
 : V\subset\mathbb{R}^n \text{ subspace of dim. } n-k+1\Big\},
\]
where $v_n$ is the eigenvector corresponding to $\lambda_n (A)$.
 We can then write
 $$\lambda_k(M)=
 \sup_{V}\Big\{\inf_{v \in V,\ |v|=1}\langle M v, v\rangle\Big\} 
 = \sup_{\mathcal{A}_V\in \mathbb{A}_k}\inf_{A\in \mathcal{A}_V} \textrm{trace}(A^t M A).
 $$

This  allows us to prove mean-value formulas for \eqref{Courant-Fisher}. In fact, Theorem \ref{thm.main1} gives
\begin{equation}
\label{eq.main.122}
\sup_{\mathcal{A}_V\in \mathbb{A}_k}\inf_{A\in \mathcal{A}_V}
\dashint_{B_{\varepsilon}(0)}
u(x+Ay)
\,dy
-
u(x)
=
\frac{\varepsilon^2}{2(n+2)}
\lambda_k(D^2 u(x))
+
o(\varepsilon^2),
\end{equation}
and the corresponding viscosity analogue following Theorem \ref{th.sol.viscosa.intro.isaacs}.

A variant of \eqref{eq.main.122} is contained in \cite{BR2},
\begin{equation}\label{fromBR2}
\sup_{\dim(V)=n-k+1}\inf_{v\in V, |v|=1} \left\{
\frac{u(x+\eps v)+u(x-\eps v)}{2}   \right\}- u(x)=  \frac{\eps^2}{2} \lambda_k(D^2 u(x))+ o(\eps^2).
\end{equation}

In order to make the connection between both mean-value formulas,  \eqref{eq.main.122} and \eqref{fromBR2}, let 
$A_v=v\otimes v$ with $|v|=1$ as before and
observe that
\[
\begin{split}
2(n+2)\left(
\dashint_{B_{\varepsilon}(0)}
u(x+A_vy)
\,dy- u(x)\right)
&= \eps^2 \langle D^2u(x) v, v\rangle+ o(\eps^2)
\\
&=
u(x+\eps v)+u(x-\eps v)-2u(x)+ o(\eps^2),
\end{split}
\]
where the error estimate is uniform in $v$.

\subsection{Uniformly elliptic fully nonlinear operators} \label{sect-UnifElip}

Let us state the definition of a uniformly elliptic operator for completeness.

\begin{definition}\label{intro.def.elipticidad.unif}
An operator $F:\Omega \times S^n(\R)\rightarrow\mathbb{R}$ is uniformly elliptic with constants $0<\theta\leq\Theta$ if and only if for every $M,N\in S^n(\R)$ with $N\geq0$,
\begin{equation}\label{la.elipticidad.uniforme}
\theta\cdot\textnormal{trace}(N)\leq F(x,M+N)-F(x,M)\leq \Theta\cdot\textnormal{trace}(N)
\end{equation}
for all $x\in\Omega$.
\end{definition}

Here we modify the formulas from the previous section to obtain a mean-value formula for general uniformly elliptic operators.
This is possible because every uniformly elliptic operator can be written as an Isaacs operator.
 We include a proof   for the reader's convenience, see also \cite{CC.2003}.

\begin{lemma}\label{lemma.every.unif.elliptic.is.Isaacs}
	\label{unif-ellip}
 Let 
 $$
 \mathcal{A}_{\theta\Theta}=\left\{A\in S^n(\R):\  \sqrt\theta |\xi|^2\leq \langle A\xi,\xi\rangle \leq \sqrt\Theta |\xi|^2\ \ \forall\xi\in\mathbb{R}^n\right\}.
 $$ 
 An operator $F:\Omega \times S^n(\R)\rightarrow\mathbb{R}$ is uniformly elliptic with constants $0<\theta\leq\Theta$ if and only if
 \begin{equation}\label{Isaacs.characterization}
\begin{split}
F(x,M)&=\underset{N\in S^n(\R)}{\inf}\sup_{A\in\mathcal{A}_{\theta\Theta}}\Big\{\textnormal{trace}\left(A^tMA\right)+F(x,N)-\textnormal{trace}\left(A^tNA\right)\Big\},\\
F(x,M)&=\underset{N\in S^n(\R)}{\sup}\inf_{A\in\mathcal{A}_{\theta\Theta}}\Big\{\textnormal{trace}\left(A^tMA\right)+F(x,N)-\textnormal{trace}\left(A^tNA\right)\Big\}.
\end{split}
 \end{equation}
\end{lemma}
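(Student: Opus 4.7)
The plan is to prove both representations in Lemma \ref{unif-ellip} simultaneously by combining a two-sided trace estimate for $F(x,M) - F(x,N)$ derived from uniform ellipticity with an explicit construction of extremal matrices $A \in \mathcal{A}_{\theta\Theta}$. The workhorse observation throughout will be that $A \in \mathcal{A}_{\theta\Theta}$ precisely when $A \in S^n(\R)$ has spectrum contained in $[\sqrt{\theta}, \sqrt{\Theta}]$, so $A^2$ has spectrum in $[\theta, \Theta]$ and $\tr(A^t P A) = \tr(P A^2)$ lies in $[\theta \tr(P), \Theta \tr(P)]$ for every $P \geq 0$.

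For the ``if'' direction, assuming $F$ is given by either Isaacs formula, I would verify \eqref{la.elipticidad.uniforme} directly. Since $\tr(A^t(M+P)A) = \tr(A^tMA) + \tr(A^tPA)$ and the correction term $\tr(A^tPA)$ is sandwiched between $\theta \tr(P)$ and $\Theta \tr(P)$ uniformly in $A \in \mathcal{A}_{\theta\Theta}$, this shift passes through the inner sup (respectively inf) and then through the outer inf (respectively sup) over $N$, yielding exactly the ellipticity bounds.

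For the ``only if'' direction, the key preliminary step is to establish the sharp estimate
\[
\theta \tr\bigl((M-N)^+\bigr) - \Theta \tr\bigl((M-N)^-\bigr) \leq F(x,M) - F(x,N) \leq \Theta \tr\bigl((M-N)^+\bigr) - \theta \tr\bigl((M-N)^-\bigr),
\]
which I would derive by writing $M - N = (M-N)^+ - (M-N)^-$ and applying \eqref{la.elipticidad.uniforme} twice: once to move from $N$ to $N + (M-N)^+$ (adding a PSD matrix) and once from $N + (M-N)^+$ to $M$ (subtracting a PSD matrix). To realize these endpoint bounds through a specific $A \in \mathcal{A}_{\theta\Theta}$, I would diagonalize $M - N = Q^t D Q$ with $Q$ orthogonal and take $A = Q^t B Q$ with $B = \mathrm{diag}(b_1, \dots, b_n)$, choosing $b_i^2 = \Theta$ on the positive eigenspace of $M-N$ and $b_i^2 = \theta$ on the nonpositive part (with the roles of $\theta$ and $\Theta$ swapped for the sup-inf representation); this forces $\tr(A^t(M-N)A)$ to equal precisely $\Theta \tr((M-N)^+) - \theta \tr((M-N)^-)$ (or the dual value).

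Combining these ingredients, for every $N$ the upper trace estimate together with the maximizing $A$ gives $\sup_{A \in \mathcal{A}_{\theta\Theta}} \{\tr(A^tMA) + F(x,N) - \tr(A^tNA)\} \geq F(x,M)$, while the trivial choice $N = M$ collapses the same expression to $F(x,M)$, so the outer infimum equals $F(x,M)$. The second formula follows symmetrically from the lower trace estimate and the minimizing $A$. The main (and rather mild) obstacle is the careful bookkeeping in the two-step ellipticity argument linking $N$ and $M$ through their positive and negative parts; once this chain is set up, the spectral construction of the extremal $A$ and the verification that $A \in \mathcal{A}_{\theta\Theta}$ are essentially automatic.
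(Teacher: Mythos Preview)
Your proposal is correct and follows essentially the same route as the paper: both arguments hinge on the Pucci-type two-sided bound $\inf_{A\in\mathcal{A}_{\theta\Theta}}\tr(A^t(M-N)A)\le F(x,M)-F(x,N)\le \sup_{A\in\mathcal{A}_{\theta\Theta}}\tr(A^t(M-N)A)$ together with the observation that equality holds at $N=M$. The only difference is one of detail---the paper simply invokes this bound as ``well known,'' whereas you unpack it via the positive/negative decomposition of $M-N$ and the explicit spectral construction of the extremal $A$, and you handle the converse direction by a direct shift argument rather than by citing the equivalence with the Pucci bounds.
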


\begin{proof} It is well known that an operator $F:\Omega \times S^n(\R)\to\mathbb{R}$ is uniformly elliptic if and only if,
\[
\inf_{A\in\mathcal{A}_{\theta\Theta}} \textnormal{trace}\left(A^t(M-N)A\right)
\leq F(x,M)-F(x,N)
\leq
\sup_{A\in\mathcal{A}_{\theta\Theta}}\textnormal{trace}\left(A^t(M-N)A\right)
\]
for every $M,N\in S^n(\R)$ and $x\in\mathbb{R}$. Since we have  equalities when  $M=N$, in particular \eqref{Isaacs.characterization} holds.
\end{proof}

With this characterization, we can prove the following.
\begin{theorem}
\label{thm.main2.bb}
Consider  $F:\Omega \times S^n(\R)\to \R$ uniformly elliptic  and 
let $u\in C^2(\Omega)$. Then, for every $x\in\Omega$ we have 
\[
\underset{N\in S^n(\mathbb{R})}{\inf}
\sup_{A\in\A}
\left(
\dashint_{B_{\varepsilon}(0)}
u(x+Ay)
\,dy
+F(x,N)-\textnormal{trace}\left(A^tNA\right)
\right)-
u(x)
=
\frac{\varepsilon^2}{2(n+2)}
\, F\big(x,D^2u(x)\big)
+
o(\varepsilon^2),
\]
as $\eps\to0$,
where $\mathcal{A}=\left\{A\in S^n_+(\R):\  \sqrt\theta |\xi|^2\leq \langle A\xi,\xi\rangle \leq \sqrt\Theta |\xi|^2,\ \ \forall\xi\in\mathbb{R}^n\right\}$.
\end{theorem}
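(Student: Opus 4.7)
The plan is to combine two ingredients: the Isaacs-type representation provided by Lemma \ref{lemma.every.unif.elliptic.is.Isaacs}, and the standard second-order Taylor expansion of averages over balls that has already been used in the proof of Theorem \ref{thm.main.bounded}. The representation that is naturally available for a uniformly elliptic $F$ with ellipticity constants $0<\theta<\Theta$ and the specific set $\mathcal{A}$ in the statement is
\[
F(x,M)=\inf_{N\in S^n(\R)}\sup_{A\in \mathcal{A}}\bigl[F(x,N)+\operatorname{trace}\bigl(A^t(M-N)A\bigr)\bigr].
\]
One inequality is immediate from the Pucci bound $F(x,M)-F(x,N)\le \mathcal{M}^+_{\theta,\Theta}(M-N)=\sup_{A\in\mathcal{A}}\operatorname{trace}(A^t(M-N)A)$, while the reverse is attained at $N=M$. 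This identity is the content I would extract from Lemma \ref{lemma.every.unif.elliptic.is.Isaacs}.

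Next, for $u\in C^{2}(\Omega)$ and any matrix $A$ with $|A|\le \sqrt{\Theta}$, a second-order Taylor expansion together with $\dashint_{B_\varepsilon(0)} y_i y_j\,dy = \delta_{ij}\,\varepsilon^2/(n+2)$ gives
\[
\dashint_{B_\varepsilon(0)} u(x+Ay)\,dy = u(x)+\frac{\varepsilon^{2}}{2(n+2)}\operatorname{trace}\bigl(A^{t}D^{2}u(x)A\bigr)+o(\varepsilon^{2}),
\]
where the remainder is uniform in $A\in\mathcal{A}$ because $\mathcal{A}$ is bounded and therefore $|Ay|\le \sqrt\Theta\,\varepsilon$ uniformly.

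Substituting this expansion into the bracket and taking the supremum over $A\in\mathcal{A}$ for each fixed $N$, the linear-in-$N$ term $F(x,N)$ comes out, and the remaining sup produces $\mathcal{M}^+_{\theta,\Theta}\bigl(\tfrac{\varepsilon^{2}}{2(n+2)}D^{2}u(x)-N\bigr)$; crucially, the $o(\varepsilon^{2})$ is independent of $N$, so it survives the subsequent infimum. Taking $\inf_N$ and using the representation in the first paragraph applied to $M=\tfrac{\varepsilon^{2}}{2(n+2)}D^{2}u(x)$ collapses the main term to $F\bigl(x,\tfrac{\varepsilon^{2}}{2(n+2)}D^{2}u(x)\bigr)$. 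Finally, since every operator within the class considered in the paper is positively $1$-homogeneous in the Hessian variable (being an inf/sup of linear trace functionals, cf.\ the examples in Sections \ref{subsec.infimums}--\ref{subsec.sup-infimums}), we have
\[
F\Bigl(x,\tfrac{\varepsilon^{2}}{2(n+2)}D^{2}u(x)\Bigr)=\tfrac{\varepsilon^{2}}{2(n+2)}F\bigl(x,D^{2}u(x)\bigr),
\]
and subtracting $u(x)$ yields the claim.

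The main obstacle is the book-keeping around the error term: we need the $o(\varepsilon^{2})$ to be pulled through both an unbounded $\inf_{N\in S^n(\R)}$ and a bounded $\sup_{A\in\mathcal{A}}$. The supremum is harmless because $\mathcal{A}$ is bounded, so the Taylor remainder is uniform in $A$; the infimum is harmless because the remainder depends only on $u$ and on $|Ay|$ and not on $N$ at all. Once this uniformity is made explicit, combining the Isaacs representation with the expansion is essentially algebraic. No new analytic input beyond Lemma \ref{lemma.every.unif.elliptic.is.Isaacs} and the Taylor expansion already exploited in Theorem \ref{thm.main.bounded} is required.
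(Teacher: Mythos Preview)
Your route is the paper's route: invoke the Isaacs representation of Lemma~\ref{lemma.every.unif.elliptic.is.Isaacs} and combine it with the Taylor expansion used in Theorem~\ref{thm.main.bounded}, relying on the boundedness of $\mathcal{A}$ for uniformity of the $o(\varepsilon^{2})$ remainder. Up to the point where you arrive at $F\bigl(x,\tfrac{\varepsilon^{2}}{2(n+2)}D^{2}u(x)\bigr)$, your argument is a correct, more explicit rendering of the paper's terse proof.

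The gap is your last step. You claim that a general uniformly elliptic $F$ is positively $1$-homogeneous in $M$ because it is an ``inf/sup of linear trace functionals''. That is not what Lemma~\ref{lemma.every.unif.elliptic.is.Isaacs} gives: the representation there carries the additive term $F(x,N)$, which destroys homogeneity. A concrete counterexample is $F(M)=\operatorname{trace}(M)+1$, uniformly elliptic with $\theta=\Theta=1$ (so $\mathcal{A}=\{I\}$); for this $F$ the left-hand side of the formula equals $\dashint_{B_\varepsilon(0)}u(x+y)\,dy-u(x)+1=\tfrac{\varepsilon^{2}}{2(n+2)}\Delta u(x)+1+o(\varepsilon^{2})$, whereas the right-hand side is $\tfrac{\varepsilon^{2}}{2(n+2)}(\Delta u(x)+1)+o(\varepsilon^{2})$, and these differ by $1+O(\varepsilon^{2})$. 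So the homogeneity claim is false in general and the step cannot be completed as written. The paper's own proof does not confront this scaling issue either; the statement appears to need an extra hypothesis (e.g.\ $F(x,\cdot)$ positively $1$-homogeneous, as is automatic for the operators in Sections~\ref{subsec.infimums}--\ref{subsec.sup-infimums}) or the correction term replaced by $\tfrac{\varepsilon^{2}}{2(n+2)}\bigl[F(x,N)-\operatorname{trace}(A^{t}NA)\bigr]$. With either amendment your argument is complete; without it, the final step is a genuine gap.
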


\begin{proof} [Proof of Theorem~\ref{thm.main2.bb}]
Since the family of matrices $\mathcal{A}_{\theta\Theta}$ is uniformly elliptic, in particular it is bounded. 
Then, an analogous result to Lemma~\ref{Lip-cond} can be obtained for the operators considered here and the proof follows 
in the same way as the one of Theorem \ref{thm.main1}. From the expression
$$
F\big(x,D^2u (x)\big)=\underset{N\in S^n(\R)}{\inf}\sup_{A\in\mathcal{A}_{\theta\Theta}}\Big\{\textnormal{trace}\left(A^t D^2u(x)
A\right)+F(x,N)-\textnormal{trace}\left(A^tNA\right)\Big\},
$$
we obtain that for a smooth function $u$ it holds that
\[
\begin{split}
\underset{N\in S^n(\mathbb{R})}{\inf}
\sup_{A\in\A}
\left(
\dashint_{B_{\varepsilon}(0)}
u(x+Ay)
\,dy
+F(x,N)-\textnormal{trace}\left(A^tNA\right)
\right)-
u(x)
=
\frac{\varepsilon^2}{2(n+2)}
\, F\big(x,D^2u(x)\big)
+
o(\varepsilon^2),
\end{split}
\]
as $\eps\to0$,
where $\mathcal{A}=\left\{A\in S^n_+(\R):\  \sqrt\theta |\xi|^2\leq \langle A\xi,\xi\rangle \leq \sqrt\Theta |\xi|^2,\ \ \forall\xi\in\mathbb{R}^n\right\}$.
\end{proof}

\begin{remark} {\rm
Observe that for some operators both Theorems \ref{thm.main1} and \ref{thm.main2.bb} apply.
Even then, the formulas that we get are different.
For example, for Issacs operators of the form
$$
F\big(x,D^2u (x)\big)=\sup_{\alpha\in \mathcal A} \inf_{\beta\in \mathcal B} \textnormal{trace}\left(A_{\alpha\beta}^t
D^2u(x) A_{\alpha\beta}\right),
$$
we get two possible mean-value formulas, namely,
	\[
	\inf_{\alpha\in \mathcal A} \sup_{\beta\in \mathcal B}
	\dashint_{B_{\varepsilon}(0)}
	u(x+A_{\alpha\beta}y)
	\,dy
	-
	u(x)
	=
	\frac{\varepsilon^2}{2(n+2)}
	F\big(x,D^2u (x)\big)
	+
	o(\varepsilon^2),
\]
and
\[		
\begin{split}
 		\underset{N\in S^n(\R)}{\inf}
		\sup_{A\in\A_{\theta\Theta}}
		\left(
		\dashint_{B_{\varepsilon}(0)}
		u(x+Ay)
		\,dy
		+F(x,N)-\textnormal{trace}\left(A^tNA\right)
		\right)&-
		u(x)
		\\
		=
		\frac{\varepsilon^2}{2(n+2)}\,
		&F\big(x,D^2u(x)\big)
		+
		o(\varepsilon^2).
\end{split}		
\]
}
\end{remark}

\subsection{Off-center means and equations involving lower-order terms} \label{sec-lower.order}

When the mean-value formula involves integrals in balls that are not centered at $0$ but at 
$\varepsilon^2 v$ with $|v|=1$, we obtain second-order operators with first-order terms,
\begin{multline}
\label{eq.main.678.34}
\inf_{A\in\A_x}
\dashint_{B_{\varepsilon} (\varepsilon^2 v)}
\frac{u(x+Ay)
-
u(x)}{\varepsilon^2}
\,dy
= 
\inf_{A\in\A_x}\left\{ \frac{1}{2(n+2)} \tr(A^tD^2u(x)A)
+ \langle Du(x), A v \rangle \right\}
+ o(1),
\end{multline}
as $\eps\to0$. Here we assume for simplicity that $\A_x\subset S_+^n(\mathbb{R})$  is  bounded  for each $x\in\Omega$;  
%Finally, let us mention that our results concerning lower-order terms can also be extended to unbounded sets of matrices. In fact, 
 whenever $\A_x$ is not bounded, we
can restrict the argument to matrices that satisfy 
 $A\leq \phi(\varepsilon)I$
 for $\phi(\varepsilon$) satisfying  \eqref{hypothesis.phi.intro}.

The mean-value formula \eqref{eq.main.678.34} is a consequence of the fact that for a $C^2$ function we have
\[
\begin{split}
 \dashint_{B_{\varepsilon} (\varepsilon^2 v)}&
u(x+Ay)
\,dy
-
u(x) 
= \dashint_{B_{\varepsilon} (0)}
u(x+A (z +\varepsilon^2 v))
\, dz
-
u(x) \\
& =
\dashint_{B_{\varepsilon} (0)}
\left(
\big\langle Du(x) , A (z +\varepsilon^2 v) \big\rangle + \frac12 \big\langle D^2u(x) A (z +\varepsilon^2 v) , A (z +\varepsilon^2 v) \big\rangle
\right)
\, dz + o(\varepsilon^2) \\
 & = \varepsilon^2 \langle Du(x) , A  v \rangle +
 \frac12 \dashint_{B_{\varepsilon} (0)}   
  \langle D^2u(x) A z , A z \rangle 
\, dz 
 + o(\varepsilon^2) \\ 
 & = \varepsilon^2 \left\{\langle Du(x) , A  v \rangle + \frac{1}{2(n+2)} \mbox{trace} (A^t D^2u(x) A) \right\}
 + o(\varepsilon^2) .
\end{split}
\]
Notice that the remainder $o(\varepsilon^2)$ is independent of $A$ because  $\A_x$ 
is a  bounded set  for each $x\in\Omega $.

In addition, we observe that when we center the average at 
$\varepsilon^\alpha v$ we obtain: for $\alpha >2$ a pure second-order operator, 
\begin{equation*}
%\label{eq.main.678.99}
\inf_{A\in\A_x}
\dashint_{B_{\varepsilon} (\varepsilon^\alpha v)}
u(x+Ay)
\,dy
-
u(x)
=
\frac{ \varepsilon^2 }{2(n+2)} \inf_{A\in\A_x} \tr(A^tD^2u(x)A)
+ o(\varepsilon^2),
\end{equation*}
as $\eps\to0$; and for $0<\alpha <2$ a pure first-order operator 
\begin{equation*}
%\label{eq.main.678.55}
\inf_{A\in\A_x}
\dashint_{B_{\varepsilon} (\varepsilon^\alpha v)}
u(x+Ay)
\,dy
-
u(x)
= \varepsilon^\alpha
\inf_{A\in\A_x}  \langle Du(x), A v \rangle 
+ o(\varepsilon^\alpha),
\end{equation*}
as $\eps\to0$.

Also, we can look for zero-order terms and consider mean-value properties like
\begin{multline*}
%\label{eq.main.678.44}
(1-\alpha \varepsilon^2)\inf_{A\in\A_x}
 \dashint_{B_{\varepsilon}(0) }
\frac{u(x+Ay)
- u(x)}
{\varepsilon^2}
\,dy
 =  
\frac{1}{2(n+2)}
\inf_{A\in\A_x} \tr(A^tD^2u(x)A)
- \alpha u(x) 
+ o(1),
\end{multline*}
as $\eps\to0$.
Arguing as before, we have
\[
\begin{split}
& 
(1-\alpha \varepsilon^2) \dashint_{B_{\varepsilon}(0) }
u(x+Ay)
\,dy
- u(x) \\
& \qquad =
 \dashint_{B_{\varepsilon}(0) }
u(x+Ay)
\,dy
- u(x) -\alpha \varepsilon^2
 \dashint_{B_{\varepsilon}(0) }
u(x+Ay)
\,dy \\
& \qquad = \varepsilon^2 \Big\{  \frac{1}{2(n+2)} \mbox{trace} (A^t D^2u(x) A) -\alpha u(x) 
\Big\} + o(\varepsilon^2).
\end{split}
\]
Similar mean-value formulas also hold for sup-inf operators with lower-order terms. We leave the details to the reader.

\section{Unbounded operators}\label{sec.unboundd}

In this section we deal with unbounded operators
 $F:S^n(\R)\to \R\cup\{-\infty\}$ given by
\begin{equation}\label{eq.thm.main3.mainsect}
F(M)=\inf_{A\in\A} \tr(A^tMA).
\end{equation}
As mentioned in the introduction,  we restrict the set of matrices where we compute the infimum in the mean-value property by
considering matrices $A\in\A$ such that
$A\leq \phi(\varepsilon)I,$ with $\phi(\eps)$ a positive function satisfying 
\begin{equation}
\label{hypothesis.phi.sect.unbounded}
 \lim_{\varepsilon\to0} \phi(\varepsilon)=+\infty
 \qquad
  \textrm{and}
 \qquad
 \lim_{\varepsilon\to0} \varepsilon\,\phi(\varepsilon) =0.
\end{equation}
The condition $A\leq \phi(\varepsilon)I$ becomes less restrictive as $\varepsilon\to0$, but  is still enough to   make  the mean-value formula \eqref{eq.thm.main3.intro.MVP} local. %and every ellipsoid over which we are integrating is contained in the domain $\Omega$ for $\varepsilon$ small. 
	In fact, for every $x\in\Omega$ and $|y|\leq \varepsilon$,
%	for every $x\in\Omega$, the conditions  $A\leq \phi(\varepsilon)I$  and $|y|\leq \varepsilon$ imply that $x+Ay\in\Omega$ for $\varepsilon$ small enough; 
%  we have
	\[
	\textrm{dist}(x+Ay,x)=|Ay|\leq  \varepsilon\, \phi(\varepsilon)  \leq \textrm{dist}(x,\partial\Omega)
	\]
	for $\varepsilon$ sufficiently small (since $\varepsilon\,\phi(\varepsilon)\to 0$ as $\varepsilon \to 0$).

Recall that we have assumed  \eqref{hyp.intro.semicont} and therefore   $$F \mbox{ is continuous in the cone }\Gamma_\A=\big\{M\in S^{n}(\mathbb{R}): F(M)>-\infty \big\}.$$
Next, we prove that, as long as $\mathcal{A}$ is unbounded, there are matrices $M\in S^{n}(\mathbb{R})$ for which $F(M)=-\infty$.

%\begin{remark}\label{GammaCondition}
%{\rm
%Observe that $F$ is an infimum of linear functions, which are continuous, therefore it is upper semi-continuous.
%Even more, $F$ is concave and continuous in $\Gamma_\mathcal{A}^\circ$.
%That is, to obtain that $F$ is continuous it is enough to require its  lower semi-continuity in $\Gamma_\mathcal{A}\setminus \Gamma_\mathcal{A}^\circ$.
%} 
%\end{remark}

\begin{lemma}
\label{Axbounded} The operator
$$
F(x,M)=\inf_{A\in\A_x} \tr(A^tMA)
$$
is finite for every $M\in  S^n(\R)$ if and only if
$\A_x$ is bounded.
\end{lemma}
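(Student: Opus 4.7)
The plan is to handle the two implications separately; both are short once one picks the right test matrix.

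\textbf{Sufficiency.} Suppose $\mathcal{A}_x$ is bounded, say $\|A\|\le C_x$ for every $A\in\mathcal{A}_x$. Then for any $M\in S^n(\mathbb{R})$,
\[
\bigl|\text{tr}(A^tMA)\bigr| \le n\,\|A\|^2\,\|M\| \le n\,C_x^2\,\|M\|,
\]
uniformly in $A\in\mathcal{A}_x$, so the infimum is a finite real number. (In particular, $F(x,M)\ge -nC_x^2\|M\|>-\infty$.)

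\textbf{Necessity.} For the converse I would argue by contrapositive: assume $\mathcal{A}_x$ is unbounded and exhibit a single matrix $M_0$ for which $F(x,M_0)=-\infty$. By Remark \ref{remark.polar.decomposition} we may replace each $A\in\mathcal{A}_x$ by the symmetric positive semidefinite factor $S$ in its left polar decomposition $A=SQ$; since $Q$ is orthogonal, $\|A\|=\|S\|$ and the new set of matrices is still unbounded, with $\text{tr}(A^tMA)=\text{tr}(S^tMS)$. Pick a sequence $S_k$ in this set with $\|S_k\|\to\infty$, and take $M_0=-I$. Then
\[
\text{tr}(S_k^t(-I)S_k) = -\text{tr}(S_k^2) = -\|S_k\|_F^2 \le -\|S_k\|^2 \longrightarrow -\infty,
\]
so $F(x,-I)=-\infty$, contradicting the hypothesis that $F(x,M)$ is finite for every symmetric $M$.

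\textbf{Main obstacle.} There is no serious obstacle: the only subtlety is making sure that the reduction to symmetric PSD matrices via the polar decomposition preserves unboundedness, which follows from $\|SQ\|=\|S\|$ for orthogonal $Q$. Once that is observed, choosing $M_0=-I$ makes $\text{tr}(S^tM_0S)$ equal to $-\|S\|_F^2$, which blows up along any unbounded sequence in the Frobenius norm, hence along any operator-norm-unbounded sequence as well.
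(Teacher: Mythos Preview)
Your proof is correct, and the necessity direction is actually simpler than the paper's. The paper assumes $\mathcal{A}_x$ is unbounded, picks a sequence $A_k$ with $\lambda_n(A_k)\to\infty$, diagonalizes $A_k=Q_k^tD_kQ_k$, passes to a subsequence so that $Q_k\to Q_\infty$, and then sets $M=Q_\infty^t J\,Q_\infty$ with $J=\mathrm{diag}(0,\dots,0,-1)$; a direct computation then gives $\tr(A_k^tMA_k)\le -(Q_\infty Q_k^t)_{nn}^2\,\lambda_n(A_k)^2\to-\infty$. Your choice $M_0=-I$ avoids the diagonalization, the compactness argument for the orthogonal matrices, and the tracking of matrix entries: since $\tr(A^t(-I)A)=-\|A\|_F^2\le -\|A\|^2$, any operator-norm-unbounded sequence drives the infimum to $-\infty$ immediately. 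Two minor remarks: first, in the paper's setup $\mathcal{A}_x\subset S_+^n(\mathbb{R})$ is already assumed, so your appeal to the polar decomposition is unnecessary (though harmless); second, your argument shows a bit more than the paper's, namely that one can always take the witnessing matrix to be $-I$, whereas the paper's construction depends on the unbounded sequence.
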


\begin{proof} 
It is clear that if $\A_x$ is bounded then $F(x,  M)$ is finite for every $M\in  S^n(\R)$.  To prove the converse, suppose that $F(x,  M)$ is finite for every $M\in  S^n(\R)$ and $\A_x$ is not bounded.
Then there exists a sequence of matrices $A_k\in \A_x$ such that their largest eigenvalues $\lambda_{n} (A_k)$ diverge as $k\to\infty$.
Write $A_k= Q_k^t D_k Q_k$, where  $Q_k$ is an orthogonal matrix and $D_k$ is a diagonal matrix with diagonal entries
$(D_k)_{jj}=\lambda_{j} (A_k)$. Let us assume that the eigenvalues of $A_k$ satisfy $0\le \lambda_{1} (A_k)\le \lambda_{2}
(A_k)\le\ldots\le \lambda_{n} (A_k)$. 
Extracting a subsequence, if needed, we may also assume that $Q_k\to Q_\infty$. 
Set $M=Q_\infty^t J\,  Q_\infty$, where $J$ is the $n\times n$ diagonal matrix with diagonal entries $\{0,0,\ldots,0,-1\}$. 
We then have
\begin{align*}
\tr (A_k^tMA_k) %&= \tr\left( Q_k^t D_k Q_k Q_\infty^t J\,  Q_\infty  Q_k^t D_k Q_k  \right)\\
&= \tr\left(  J Q_\infty Q_k^t D_k^2 Q_k Q_\infty^t       \right) 
= -\left( Q_\infty Q_k^t D_k^2 Q_k Q_\infty^t \right)_{nn}\\
%&= -\sum_{j=1}^n \left(    Q_\infty Q_k^t D_k^2   \right)_{nj} \left(  Q_k Q_\infty^t    \right)_{jn}
% = -\sum_{j=1}^n \left(   \sum_{l=1}^n (Q_\infty Q_k^t)_{nl} (D_k^2)_{lj}\right) \left(  Q_k Q_\infty^t    \right)_{jn}\\
& = -\sum_{j=1}^n   (Q_\infty Q_k^t)_{nj} [\lambda_{j} (A_k)]^2 \left( Q_k Q_\infty^t    \right)_{jn}
= -\sum_{j=1}^n   (Q_\infty Q_k^t)_{nj}^2 [\lambda_{j} (A_k)]^2 \\
&
\le  - (Q_\infty Q_k^t)_{nn}^2 [\lambda_{n} (A_k)]^2,
\end{align*}
which tends to $-\infty$ since $(Q_\infty Q_k^t)_{nn}\to 1$ and $\lambda_{n} (A_k)\to\infty$ as $k\to\infty$ contradicting the fact 
that $F(x,  M)$ is finite. 
\end{proof}

We can now proceed with the proof of Theorem \ref{thm.main3.intro}. 
First, we prove a continuity lemma, analogous to Lemma~\ref{Lip-cond}.
Here, however, we  have the extra restriction 
 $A\leq \phi(\eps)I$ and the continuity of  $F$ from only one side.  This is because  the cone $\Gamma_\A$ is, in principle, neither open nor closed, and when $D^2u(x)\in \partial \Gamma_\A$ we can only use
perturbations of the form $D^2u(x) + \eta I$.

\begin{lemma}
\label{continuity}
For every $M\in \Gamma_\A$, we have
\begin{equation}
\label{eq.hypoUP}
\mathop{\mathop{\inf}_{A\in\A}}_
{A\leq \phi(\varepsilon)I}
\tr(A^t\left(M+\eta I\right)A)
\to
\inf_{A\in\A} \tr (A^tMA)
\qquad
\textrm{as $\eps,\eta\searrow 0$.}
\end{equation}
\end{lemma}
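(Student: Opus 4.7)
The strategy is to sandwich the truncated quantity between $F(M+\eta I)$ from below and a near-minimizer bound from above, then let both parameters tend to zero. Write $F_\eps(N)=\inf\{\tr(A^tNA):A\in\A,\ A\leq\phi(\eps)I\}$ and recall $F(N)=\inf_{A\in\A}\tr(A^tNA)$. Two preliminary observations drive the whole argument. First, truncating the admissible set only increases the infimum, so $F_\eps(N)\geq F(N)$ for every $N\in S^n(\R)$. Second, adding $\eta I$ cannot push $M$ out of the cone: invoking the symmetry/positive semi-definiteness of $A\in\A$ (see Remark \ref{remark.polar.decomposition}), we have $\tr(A^t(\eta I)A)=\eta\,\tr(A^2)\geq 0$, hence $F(M+\eta I)\geq F(M)>-\infty$, so $M+\eta I\in\Gamma_\A$ for every $\eta\geq 0$, and the continuity hypothesis \eqref{hyp.intro.cont} applies at this point.

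For the lower bound I would just combine the two observations: $F_\eps(M+\eta I)\geq F(M+\eta I)$, and then take $\liminf$ using continuity of $F$ on $\Gamma_\A$ at $M$ to obtain
$$\liminf_{\eps,\eta\searrow 0} F_\eps(M+\eta I)\geq F(M).$$

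For the upper bound, given $\delta>0$ I would pick a near-minimizer $A_0\in\A$ with $\tr(A_0^tMA_0)<F(M)+\delta$. Since $A_0$ is a single fixed matrix and $\phi(\eps)\to\infty$ as $\eps\searrow 0$, there exists $\eps_0>0$ such that $A_0\leq\phi(\eps)I$ for every $\eps<\eps_0$. Thus $A_0$ is admissible in the truncated problem and
$$F_\eps(M+\eta I)\leq\tr(A_0^t(M+\eta I)A_0)=\tr(A_0^tMA_0)+\eta\,\tr(A_0^2)<F(M)+\delta+\eta\,\tr(A_0^2).$$
Taking $\limsup$ as $\eps,\eta\searrow 0$ yields $\limsup\leq F(M)+\delta$, and since $\delta>0$ is arbitrary this closes the sandwich.

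The main subtlety is that $\Gamma_\A$ is in general neither open nor closed, so when $M\in\partial\Gamma_\A$ one genuinely needs hypothesis \eqref{hyp.intro.cont}; in the interior, concavity of an infimum of linear functionals would already deliver continuity for free, as noted in Remark \ref{GammaCondition}. A secondary point is that the perturbation here is one-sided, $M+\eta I$ only, matching the one-sided sandwich available in the unbounded setting: the lower analogue $M-\eta I$ could escape $\Gamma_\A$, which is precisely why Lemma \ref{Lip-cond}'s two-sided statement had to be weakened here.
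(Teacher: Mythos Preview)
Your proof is correct, and the upper bound matches the paper's argument essentially verbatim. The only noteworthy difference is in the lower bound: you pass through $F_\eps(M+\eta I)\geq F(M+\eta I)$ and then invoke the continuity hypothesis \eqref{hyp.intro.cont} to send $\eta\to 0$, whereas the paper observes more directly that for every admissible $A$ one has $\tr(A^t(M+\eta I)A)=\tr(A^tMA)+\eta\,\tr(A^tA)\geq \tr(A^tMA)\geq F(M)$, so the truncated infimum is already $\geq F(M)$ with no limit needed. In particular, the paper's proof of this lemma does \emph{not} use \eqref{hyp.intro.cont} at all; that hypothesis enters only later, in the lower bound of the mean value formula in Theorem~\ref{thm.main3.intro}. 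So your closing remark that ``one genuinely needs hypothesis \eqref{hyp.intro.cont}'' for this lemma is not quite right: your route needs it, but the simpler monotonicity route does not. What your approach buys is a clean conceptual separation (truncation raises the infimum; continuity handles the $\eta$-perturbation), at the cost of invoking a hypothesis that turns out to be unnecessary here.
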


\begin{proof}
Since $\tr(A^t\left(M+\eta I\right)A)\geq \tr(A^tMA)$, we have
\[
\mathop{\mathop{\inf}_{A\in\A}}_
{A\leq \phi(\varepsilon)I}
\tr(A^t\left(M+\eta I\right)A)
\geq
\inf_{A\in\A} \tr (A^tMA).
\]
Let us fix $M\in \Gamma_\A$ and $\delta>0$.
We consider $A_0\in\A$ such that
\[
\inf_{A\in\A} \tr (A^tMA)+\delta/2\geq \tr (A_0^tMA_0).
\]
Since $\phi(\varepsilon)\to\infty$ as $\eps\to 0$ there exists $\eps_0$ such that $A_0\leq \phi(\varepsilon)I$ for every $\eps<\eps_0$.
Let $\eta_0>0$ be such that $\tr(A_0^tA_0)\eta_0<\delta/2$, we get
\[
\begin{split}
\mathop{\mathop{\inf}_{A\in\A}}_{A\leq \phi(\varepsilon)I} &
\tr(A^t\left(M+\eta I\right)A) 
\leq
\tr(A_0^t\left(M+\eta I\right)A_0)\\
&=
\tr(A_0^tMA_0)+\tr(A_0^tA_0)\eta 
 \leq
\inf_{A\in\A} \tr (A^tMA)+\delta/2+\delta/2
\end{split}
\]
for every $\eps<\eps_0$ and $\eta<\eta_0$. We have proved \eqref{eq.hypoUP}.
\end{proof}

%These perturbations allow us to obtain an upper bound with an argument similar to the one used before. To obtain the lower bound we need a different argument and at this point is where we use that $F$ is continuous in $\Gamma_\A$.

Lemma \ref{continuity}  allows us to obtain an upper bound for
\begin{equation}\label{what.needs.to.be.bounded}
\mathop{\mathop{\inf}_{A\in\A}}_
{A\leq \phi(\varepsilon)I}
\dashint_{B_{\varepsilon}(0)}
u(x+Ay)
\,dy
-
u(x).
\end{equation}
To conclude the lower bound we need a different argument and it is at this point that we use that $F$ is continuous in $\Gamma_\A$. The continuity of $F$ is a necessary condition
in order to have a mean-value property, see Example~\ref{remark.ejemplo2} below.

\begin{proof}[Proof of Theorem \ref{thm.main3.intro}]
From \eqref{hypothesis.phi.sect.unbounded} it follows that  $A\leq \phi( \varepsilon)I$ and $|y|\leq \varepsilon$ imply
 $x+Ay\in B_\delta(x)$  for $\varepsilon< \varepsilon_0$, where $\varepsilon\, \phi(\varepsilon)< \delta$ for every $\varepsilon<\varepsilon_0$ and $\delta$ is as in \eqref{u-P.o.pequena}. Therefore, by \eqref{u-P.o.pequena}, if $\varepsilon<\varepsilon_0$, then
 \begin{equation}\label{C2.case.second.part.78}
u(x+Ay)
\leq
P_\eta^+(x+Ay)\qquad\textrm{for every}\ y\in B_\varepsilon(0),
\end{equation}
for
\[
P_\eta^+(z)=P(z)+\frac{\eta}{2}|z-x|^2
\]
with $P$ given by \eqref{definition.P.main}.
Then, following the proof of Theorem \ref{thm.main1}, we have
\begin{equation}
\label{lower-upperBound}
\begin{split}
\dashint_{B_{\varepsilon}(0)}u(x+Ay)\,dy - u(x)
\leq 
\frac{\varepsilon^2}{2(n+2)}\,\textrm{trace}\left(A^t\left(D^2u(x)+\eta I\right)A\right).
\end{split}
\end{equation}
By \eqref{eq.hypoUP} this gives us an upper bound of \eqref{what.needs.to.be.bounded}.

To obtain a lower bound we use the continuity of $F$.
Given $x$ and $\eta>0$ there exists $\delta>0$ such that
\[
F(D^2 u(z))\geq  F(D^2 u(x))-\eta
\]
for every $z\in B_\delta(x)$.
Then, there exists $\eps_0>0$ such that for every $\eps<\eps_0$, $y\in B_{\eps}(0)$ and $A\in\A$ with $A\leq \phi( \varepsilon)I$ we have $x+Ay\in B_\delta(x)$.
Let us fix $A\in\A$. We have
\[
\trace(A^t D^2 u(x+Ay) A)\geq F(D^2 u(x+Ay))\geq F(D^2 u(x))-\eta.
\]
We consider $v(y)=u(x+Ay)$, and we observe that $$\Delta v (y) =\trace(A^t D^2 u(x+Ay) A)\geq F(D^2 u(x))-\eta .$$ 
Therefore, from the mean-value formula for the Laplacian, we get
\[
\frac{\eps^2}{2(n+2)}\big(F(D^2 u(x))-\eta\big)
\leq
\dashint_{B_{\varepsilon}(0)}v(y)\,dy - v(0)
=
\dashint_{B_{\varepsilon}(0)}u(x+Ay)\,dy - u(x).
\]
With this lower bound, taking infimums and the limit as $\eta \to 0$, we have completed the proof.
\end{proof}

\begin{remark} {\rm
Our assumptions imply that $F(D^2u(x))>-\infty$ (since $u$ is assumed to be $\mathcal{A}$-admissible).
Observe that in the case that $F(D^2u(x))=-\infty$ by the upper bound in \eqref{lower-upperBound} we also get
\[
\inf_{A\in\A}
\dashint_{B_{\varepsilon}(0)}u(x+Ay)\,dy - u(x)
=-\infty.
\]
}
\end{remark}

Observe that condition \eqref{hyp.intro.cont} is necessary. In Example  \ref{remark.ejemplo2} this assumption is not satisfied and
the mean-value formula fails.

\begin{remark}\label{remark.F.geq.C}
{\rm
In some important examples we have
\begin{equation}
\label{hypo.F=0}
F(M)=0 \quad \textrm{for every}\  M\in \partial {\Gamma}_\A
\end{equation}
and
\[
{\Gamma}_\A=\Big\{M\in S^{n}(\mathbb{R}): \tr(A^t M A)\geq 0\ \textrm{for all}\ A\in \A \Big\}.
\]
This holds for the Monge-Amp\`ere equation, see \eqref{mongeasinf}, and more generally for the $k$-Hessians, see Lemma \ref{def.k} and Remark \ref{remark.k-hess.zero.boundary} below.
Condition \eqref{hypo.F=0} was used in \cite{HL1} 
in relation to existence and uniqueness of solutions 
to general fully nonlinear second-order PDEs. 
We observe that condition \eqref{hypo.F=0} implies the continuity of $F$ in $\Gamma_\A$ and therefore Theorem \ref{thm.main3.intro} can be proved assuming 
 \eqref{hypo.F=0}  instead of \eqref{hyp.intro.cont}.
In fact, if there exists $C\in\R$ such that the cone $\Gamma_\A =\{ M: F(M) > -\infty\}$ can be written as
\[
\Gamma_\A=\big\{M:F(M)\geq C\big\}
\]
and $F\equiv C$ in $\partial\Gamma_\A$, then $F$ is continuous; that is, \eqref{hyp.intro.cont} holds.
This can be easily deduced from the fact that $F$ is lower 
semi-continuous on $\partial\Gamma_\A$  because $F$ attains its minimum at every point of the boundary.}
\end{remark}

In the following example we show that  condition \eqref{hypo.F=0} is not necessary by exhibiting
an operator that does not satisfy \eqref{hypo.F=0} for which the mean-value property holds.
Here,
$F$ is not constant on the boundary of the associated cone; nevertheless Theorem \ref{thm.main3.intro} applies
since $F$ is continuous.

\begin{example} \label{remark.ejemplo1} {\rm
If we consider 
\[
\A=\left\{
\begin{bmatrix}
1&0\\
0& n 
\end{bmatrix}
:
n=0,1,2,\ldots\right\},
\]
then we have
\[
F(D^2u )=\inf_{A\in\A} \tr(A^tD^2u A)
=
\begin{cases}
u_{x_1x_1} & \text{ if } u_{x_2x_2}\geq 0\\
-\infty & \text{ if } u_{x_2x_2}<0.
\end{cases}
\]
The equation $F(D^2u)=f$ is equivalent to have $u_{x_1x_1}=f$ and $u_{x_2x_2}\geq 0$. 
It is not a nice equation, in the sense that it is overdetermined. 
Observe, however,  that $F$ is continuous where it is finite and therefore our result applies to this case and we have a mean
value formula for this operator.

Associated with this $F$, we have the closed cone $$\Gamma=\Big\{M:M_{22}\geq 0 \Big\} = \Big\{ M : F(M) > -\infty \Big\}.$$
It is interesting that in this case $F$ is not constant in the boundary of $\Gamma$.
Observe that both 
\[
\begin{bmatrix}
0& 0\\
0& 0
\end{bmatrix}
\quad
\text{and}
\quad
\begin{bmatrix}
1& 0\\
0& 0
\end{bmatrix}
\]
belong to $\Gamma$.
Even more, they belong to the boundary of $\Gamma$, in fact 
\[
\begin{bmatrix}
0& 0\\
0& -\delta
\end{bmatrix}
\quad
\text{and}
\quad
\begin{bmatrix}
1& 0\\
0& -\delta
\end{bmatrix}
\]
are not in $\Gamma$ for every $\delta>0$.
Hence, $F$ is not constant on the boundary of the cone since we have 
\[
F\left(
\begin{bmatrix}
0& 0\\
0& 0
\end{bmatrix}
\right)
=0
\quad
\text{and}
\quad
F\left(
\begin{bmatrix}
1& 0\\
0& 0
\end{bmatrix}
\right)
=1.
\]
}
\end{example}

The second example shows that continuity of $F$ is necessary for the validity
of a mean-value formula.

\begin{example}
\label{remark.ejemplo2} {\rm
We provide an example showing that the continuity of $F$ is a necessary condition for Theorem~\ref{thm.main3.intro}.
We consider the set $\A=\{A_
\delta\}_{\delta>0}$ where
\[
A_\delta=
\begin{bmatrix}
 \sqrt{2}/\sqrt{\delta} & 0\\
0 &  1/\delta
\end{bmatrix}
\]
and define
\[
F(D^2 u)
=
\inf_{A\in\A}
\trace(A^t D^2u A)
=\inf_{\delta>0}\left( \frac{2}{\delta}\, u_{x_1x_1}+\frac{1}{\delta^2} \,u_{x_2x_2}\right).
\]
We have
\[
F(D^2 u)=
\left\{
\begin{aligned}
-&\infty & &\text{if } u_{x_2x_2}<0\\
-&\infty & &\text{if } u_{x_2x_2}=0 \text{ and } u_{x_1x_1}<0\\
&0 & &\text{if } u_{x_2x_2} \geq 0 \text{ and } u_{x_1x_1}\geq 0\\
 -& \frac{u_{x_1x_1}}{u_{x_2x_2}} & &\text{if } u_{x_2x_2}> 0 \text{ and }u_{x_1x_1}<0,
\end{aligned}
\right.
\]
see Figure \ref{fig:F}.
Observe that $F$ is not continuous at the origin, even if we restrict the domain to the set where it is finite.
\begin{figure}
\begin{center}
\begin{tikzpicture}
\node[rectangle,text=red,fill = red!30,minimum width = 5cm, minimum height = 1.5cm] 
 at (0,-0.75){$F=-\infty$};
 \node[rectangle,text=blue,fill = blue!30,minimum width = 2.5cm, minimum height = 2.25cm] 
 at (1.25,1.14){$F=0$};
\node [text=black] at (-0.6,2.75){$u_{x_2x_2}$};
\node [text=black] at (3.2,-0.4){$u_{x_1x_1}$};
\node [text=black] at (-1.4,1.13){$\displaystyle F= - \frac{u_{x_1x_1}^2}{u_{x_2x_2}}$};
\draw [<-,thick,red] (-3,0) -- (0,0);
\draw [->,thick,blue] (0,0) -- (3.25,0);
\draw [->,thick,blue] (0,0) -- (0,3);
\node at (0,0)[circle,fill=blue,inner sep=1.5pt]{};
\end{tikzpicture}
\end{center}
\caption{The operator $F$ in Example \ref{remark.ejemplo2}}
\label{fig:F}
\end{figure}
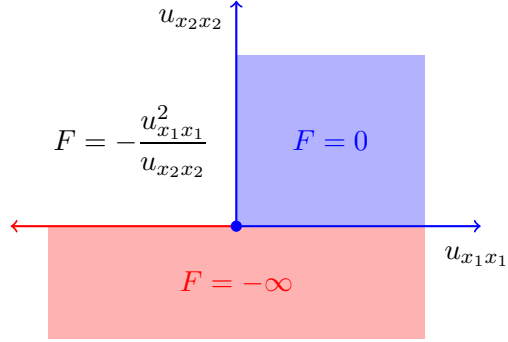

Consider 
\[
u(x_1,x_2)=-|x_1|^{5/2}+x_1^2x_2^2+x_2^{10}.
\]
Observe that $u\in C^2$ and $\mathcal{A}$-admissible.
In fact we have $$u_{x_2x_2} (x_1,x_2)=2x_1^2+90x_2^{8}\geq 0,$$
with equality only at the origin, with $u_{x_1x_1} (0,0)=0$ (notice that $D^2u(0, 0) = 0$). 
Therefore, we have that $F(D^2u (x_1,x_2)) > -\infty$ for every $(x_1,x_2) \in \mathbb{R}^2$. 

Let us show that the mean-value formula in Theorem \ref{thm.main3.intro} is not satisfied at $(0,0)$, i.e.,
$$
\inf_{\delta>0}\vint_{B_{\eps}(0)} u(A_\delta y) \,dy
-
u(0,0)
\neq
\frac{\varepsilon^2}{2(n+2)}
F(D^2u (0,0))
+
o(\varepsilon^2).
$$
Since $D^2 u(0,0)=0$, we have $F(D^2 u(0,0))=0$
and we only need to prove that
\begin{equation}\label{what.we.want.example.2}
\inf_{\delta>0}\vint_{B_{\eps}(0)} u(A_\delta y) \,dy \neq o(\eps^2). 
\end{equation}
An explicit computation shows that
\[
\begin{split}
\vint_{B_{\eps}(0)} u(A_\delta y) \,dy
=
\vint_{B_{\eps}(0)} \left(\frac{-2^{5/4}|y_1|^{5/2}}{\delta^{5/4}}+\frac{2y_1^2y_2^2}{\delta^3}+\frac{y_2^{10}}{\delta^{10}}\right) \,dy
=
-C_1\frac{\eps^{5/2}}{\delta^{5/4}}
+C_2\frac{\eps^{4}}{\delta^3}
+C_3\frac{\eps^{10}}{\delta^{10}}.
\\
\end{split}
\]
For $\delta=\eps^{1/2}$ we have
\[
\frac{\eps^{5/2}}{\delta^{5/4}}=\eps^{15/8},\quad
\frac{\eps^{4}}{\delta^3}=\eps^{5/2} \quad \text{and} \quad
\frac{\eps^{10}}{\delta^{10}}=\eps^{5}
\]
and we obtain
\[
\inf_{\delta>0}\vint_{B_{\eps}(0)} u(A_\delta y) \,dy \leq-C \eps^{15/8},
\]
which proves \eqref{what.we.want.example.2}.
Therefore the mean-value formula in Theorem \ref{thm.main3.intro}
does not hold in this case. 
}
\end{example}

\section{$k$-Hessian operators} \label{section.k.hess}

 A relevant example where  Theorem~\ref{thm.main3.intro}  applies are the $k$-Hessian operators. For $k=2,\ldots,n$ the $k$-Hessian operators are given by  elementary symmetric polynomials in the eigenvalues of the Hessian, i.e.,
\[
\sigma_k(\lambda_1,\dots,\lambda_n)=\sum_{1\leq i_1< i_2<\cdots<i_k\leq n}\lambda_{i_1}\lambda_{i_2}\dots\lambda_{i_k}.
\]
We write the operators  in the form
\begin{equation}\label{k.hessian.rewritten}
 F_k(D^2u (x))= k\, \sigma_k(\lambda(D^2u (x)))^{\frac{1}{k}},
\end{equation}
where $\lambda(D^2u (x))=\big(\lambda_1(D^2u(x)),\ldots,\lambda_n(D^2u(x))\big)$, see Lemma \ref{def.k2} below.

%We have written the operator in the form \eqref{k.hessian.rewritten} to fit our framework, but it is clear that our result applies to solutions of $\sigma_k(\lambda(D^2u))=f\geq 0$ writing this equation as $k\, \sigma_k(\lambda(D^2u))^{\frac{1}{k}} = k f^{\frac{1}{k}} = \tilde{f}$.
%We assume that $k>1$ as the Laplacian behaves in a different way and the mean-value formula for harmonic functions is well known.

We recall some definitions and properties   of elementary symmetric polynomials, see for example \cite{Wang}.
We define the cone 
\[
\Gamma_k=\Big\{\lambda\in\R^n: \sigma_j(\lambda)> 0\ \text{for all}\ j=1,\dots,k \Big\}.
\]
With a slight  abuse of notation we write $M\in\Gamma_k$ to denote that $\lambda(M)\in \Gamma_k$.
We have
\[
\overline\Gamma_k=\Big\{\lambda\in\R^n: \sigma_j(\lambda)\geq  0\ \text{for all}\ j=1,\dots,k \Big\}
\]
and $\overline\Gamma_k^\circ=\Gamma_k$.
Let us define
\[
\sigma_{k-1,i}(\gamma_1,\dots,\gamma_n)=\sigma_{k-1}(\gamma_1,\dots,\gamma_{i-1},0,\gamma_{i+1},\dots,\gamma_n)
\]
and 
\[
\A_k=
\Big\{
A: \ \lambda_i^2(A)=\sigma_{k-1,i}(\gamma) \text{ with } \gamma=(\gamma_1,\dots,\gamma_n)\in  \Gamma_k \text{ and } \sigma_k(\gamma)=1
\Big\}.
\]
Then, we have $\A_k\subset S^n_+(\mathbb{R})$ since
\[
\sigma_{k-1,i}(\gamma )>0,\qquad\forall \gamma \in \Gamma_k,
\]
see \cite{Wang}.
Also observe that by the continuity of $\sigma_{k-1,i}(\gamma)$ we get $\sigma_{k-1,i}(\gamma)\geq 0$ for every $\gamma\in \overline \Gamma_k$.

Our goal is find the $k$-Hessian counterpart of formula \eqref{mongeasinf}, which holds for the Monge-Amp\`ere operator ($k=n$). We do that in two steps, first show the result for numbers, then for matrices.
\begin{lemma}
\label{def.k}
For every $\mu=(\mu_1,\ldots,\mu_n)\in\R^n$ we have
\[
\mathop{\mathop{\inf}_{\gamma\in  \Gamma_k}}_
{\sigma_k(\gamma)=1}
\sum_{i=1}^n \mu_i\, \sigma_{k-1,i}(\gamma)
=
\left\{
\begin{aligned}
&k\, \sigma_k(\mu)^{\frac{1}{k}}
 & &\textrm{if}\ \mu\in \overline \Gamma_k,\\
& -\infty& &\textrm{otherwise}.
\end{aligned}
\right.
\]
\end{lemma}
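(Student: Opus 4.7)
The plan is to reduce the lemma to Garding's inequality for the hyperbolic polynomial $\sigma_k$ via polarization. Let $M_k$ denote the symmetric multilinear polarization of $\sigma_k$, so that $\sigma_k(\gamma)=M_k(\gamma,\ldots,\gamma)$. Differentiating $\sigma_k(\gamma+t\mu)$ at $t=0$ yields the key identity
\[
\sum_{i=1}^n \mu_i\,\sigma_{k-1,i}(\gamma)=k\,M_k(\mu,\gamma,\ldots,\gamma),
\]
of which the diagonal case $\sum_i \gamma_i\,\sigma_{k-1,i}(\gamma)=k\sigma_k(\gamma)$ is the usual Euler relation.

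For $\mu\in\overline{\Gamma}_k$, I would invoke Garding's inequality,
\[
M_k(\mu,\gamma,\ldots,\gamma)\;\geq\;\sigma_k(\mu)^{1/k}\,\sigma_k(\gamma)^{(k-1)/k},\qquad \mu\in\overline{\Gamma}_k,\ \gamma\in\Gamma_k,
\]
with equality precisely when $\mu$ and $\gamma$ are positively proportional. Under the constraint $\sigma_k(\gamma)=1$, this produces the lower bound $k\sigma_k(\mu)^{1/k}$. When $\mu\in\Gamma_k$ the choice $\gamma=\mu/\sigma_k(\mu)^{1/k}$ is admissible and attains equality. For $\mu\in\partial\Gamma_k\cap\overline{\Gamma}_k$, I would approximate by $\gamma_\varepsilon=(\mu+\varepsilon\mathbf{1})/\sigma_k(\mu+\varepsilon\mathbf{1})^{1/k}\in\Gamma_k$ and use continuity of the $\sigma_{k-1,i}$ to recover the value $k\sigma_k(\mu)^{1/k}$ as $\varepsilon\to0$.

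For $\mu\notin\overline{\Gamma}_k$, I would show the infimum is $-\infty$. The self-duality of the Garding cone, $\overline{\Gamma}_k=\{\nu:\sum_i\nu_i\sigma_{k-1,i}(\gamma)\geq 0\ \text{for all}\ \gamma\in\Gamma_k\}$, is itself a consequence of Garding's inequality and supplies some $\gamma_0\in\Gamma_k$ with $L_{\gamma_0}(\mu):=\sum_i\mu_i\sigma_{k-1,i}(\gamma_0)<0$. Using the homogeneities $L_{t\gamma}(\mu)=t^{k-1}L_\gamma(\mu)$ and $\sigma_k(t\gamma)=t^k\sigma_k(\gamma)$, the constrained infimum equals the infimum over $\Gamma_k$ of the scale-invariant ratio $L_\gamma(\mu)/\sigma_k(\gamma)^{(k-1)/k}$. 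To drive this ratio to $-\infty$, I would deform $\gamma_0$ within $\Gamma_k$ along a path that approaches the face $\{\sigma_k=0\}\cap\partial\Gamma_k$ at a point where $L_\gamma(\mu)$ is still strictly negative, so that the denominator vanishes while the numerator stays bounded away from zero.

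The main technical point I expect trouble with is that last step: arranging the deformation so it exits $\Gamma_k$ through the principal stratum $\{\sigma_k=0\}$ without $L_\gamma(\mu)$ vanishing simultaneously. The open set $\{\gamma\in\Gamma_k:L_\gamma(\mu)<0\}$ is nonempty by the self-duality above, so its closure meets $\partial\Gamma_k$; the delicate point is to check that it can be made to meet the codimension-one face where $\sigma_k=0$ rather than only the lower-dimensional strata where some other $\sigma_j$ vanishes. Granted this, the ratio $L_\gamma(\mu)/\sigma_k(\gamma)^{(k-1)/k}$ blows up to $-\infty$ along any such approach, completing the proof.
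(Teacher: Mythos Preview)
Your treatment of the case $\mu\in\overline{\Gamma}_k$ is essentially identical to the paper's: both use the G{\aa}rding/polarization inequality (cited in the paper from \cite{Ca-Ni-Sp2}) to get the lower bound, attain it at $\gamma=\mu/\sigma_k(\mu)^{1/k}$ for $\mu\in\Gamma_k$, and handle $\mu\in\partial\Gamma_k$ by the same $\mu+\varepsilon\mathbf{1}$ approximation.

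The gap you flag in the case $\mu\notin\overline{\Gamma}_k$ is real, and your framing of it is slightly off. Every point of $\partial\Gamma_k$ already satisfies $\sigma_k=0$, so the issue is not which stratum you hit but whether you can reach a boundary point $\gamma^*\neq0$ with $L_{\gamma^*}(\mu)<0$ strictly. The self-duality argument gives you $\gamma_0\in\Gamma_k$ with $L_{\gamma_0}(\mu)<0$, but the closure of $\{L_\gamma(\mu)<0\}\cap\Gamma_k$ certainly contains $0$ (where $L_0(\mu)=0$), and you have not shown it contains any other boundary point with $L<0$. In fact the paper's proof reveals why this is delicate: the natural boundary candidate $\gamma^*=\mu+a\mathbf{1}\in\partial\Gamma_k$ satisfies $L_{\gamma^*}(\mu)=-a(n-k+1)\sigma_{k-1}(\mu+a\mathbf{1})$, which may vanish. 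When it does, the paper cannot simply approach a boundary point with $L<0$; instead it performs a careful asymptotic expansion of both $\sigma_{k-1}(\mu+(a+\varepsilon)\mathbf{1})$ and $\sigma_k(\mu+(a+\varepsilon)\mathbf{1})$ in powers of $\varepsilon$, identifying the largest $l<k$ with $\sigma_l(\mu+a\mathbf{1})>0$ and showing the ratio behaves like $\varepsilon^{-l/k}$. A residual case $\mu=-a\mathbf{1}$ is handled by an ad hoc choice $\gamma_b=(1,\dots,1,b,1/b,0,\dots,0)$.

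So your topological sketch is on the right track but stops exactly where the work begins; the paper's argument is computational and constructive precisely because the abstract deformation you describe is not straightforward to produce in the degenerate situation.
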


\begin{remark}\label{remark.k-hess.zero.boundary} {\rm
Note that 
\[
\mathop{\mathop{\inf}_{\gamma\in  \Gamma_k}}_
{\sigma_k(\gamma)=1}
\sum_{i=1}^n \mu_i\, \sigma_{k-1,i}(\gamma)
=
0 \qquad\textrm{if}\ \mu\in \partial \Gamma_k,
\]
since 
\begin{equation}\label{boundary.gamma.k}
\partial\Gamma_k=\Big\{M\in S^n (\mathbb{R}): \sigma_k(\lambda(M))=  0\ \textrm{and}\ \sigma_j(\lambda(M))\geq  0\ \text{for all}\ j=1,\dots,k-1   \Big\}.
\end{equation}
%\begin{equation}\label{boundary.gamma.k}
%\partial\Gamma_k=\Big\{M\in S^n (\mathbb{R}): \sigma_k(\lambda(M))=  0\ \textrm{and}\ \sigma_j(\lambda(M))>  0\ \text{for all}\ j=1,\dots,k-1   \Big\}\cup\{0\}.
%\end{equation}
}
\end{remark}

\begin{proof}[Proof of Lemma \ref{def.k}]
From formula (1.3) in \cite{Ca-Ni-Sp2} (see also (xi) in \cite{Wang}), we have that for all $\mu,\gamma\in \Gamma_k$, 
\[
k\, \sigma_k(\mu)^{\frac{1}{k}}\,\sigma_k(\gamma)^{\frac{k-1}{k}}
\leq 
\sum_{i=1}^n \mu_i \,\sigma_{k-1,i}(\gamma),
\]
with an equality for $\gamma^*= \sigma_k(\mu)^{-\frac{1}{k}}\,\mu$, i.e.,
\[
\sum_{i=1}^n \mu_i \,\sigma_{k-1,i}(\gamma^*)
=\sum_{i=1}^n \mu_i \,\frac{\sigma_{k-1,i}(\mu)}{\sigma_k(\mu)^{\frac{k-1}{k}}}
= \frac{k\,\sigma_k(\mu)}{\sigma_k(\mu)^{\frac{k-1}{k}}}
=k\, \sigma_k(\mu)^{\frac{1}{k}}.
\]

For $\mu\in\partial\Gamma_k$, observe that $(\mu_1+\eps,\dots,\mu_n+\eps)\in\Gamma_k$ for every $\eps>0$.
We consider 
\[
\gamma_i^\eps=(\mu_i+\eps)\, \sigma_k(\mu_1+\eps,\dots,\mu_n+\eps)^{-\frac{1}{k}}
\]
and we have
\[
k\, \sigma_k(\mu_1+\eps,\dots,\mu_n+\eps)^{\frac{1}{k}}
=
\sum_{i=1}^n (\mu_i+\eps)\, \sigma_{k-1,i}(\gamma^\eps)
\geq
\sum_{i=1}^n \mu_i\, \sigma_{k-1,i}(\gamma^\eps),
\]
 since $\sigma_{k-1,i}(\gamma^\eps)\geq 0$. 
To conclude observe that $$k\, \sigma_k(\mu_1+\eps,\dots,\mu_n+\eps)^{\frac{1}{k}} \to k\, \sigma_k(\mu)^{\frac{1}{k}}$$ as $\varepsilon\to0$.

The only remaining case is when $\mu\not\in\overline\Gamma_k$.
Notice that if $a>0$ is large enough such that $\mu_i+a>0$ for every $i\in\{1,\dots,n\}$, we get $(\mu_1+a,\dots,\mu_n+a)\in\Gamma_k$.
Thus, there exists $a>0$ such that $(\mu_1+a,\dots,\mu_n+a)\in\partial\Gamma_k$. Observe that  $\sigma_k(\mu_1+a,\dots,\mu_n+a)=0$
by \eqref{boundary.gamma.k}. 
Let us fix one such  value $a$ and  consider 
\[
\gamma_i^\eps=(\mu_i+a+\eps)\, \sigma_k(\mu_1+a+\eps,\dots,\mu_n+a+\eps)^{-\frac{1}{k}}.
\]
As before, we have
\[
\sum_{i=1}^n (\mu_i+a+\eps)\, \sigma_{k-1,i}(\gamma^\eps)
=
k\, \sigma_k(\mu_1+a+\eps,\dots,\mu_n+a+\eps)^{\frac{1}{k}}
\]
and 
therefore,
\[
\begin{split}
\lim_{\eps\to 0}&\sum_{i=1}^n \mu_i \,\sigma_{k-1,i}(\gamma^\eps)
=-\lim_{\eps\to 0}(a+\varepsilon)\sum_{i=1}^n \sigma_{k-1,i}(\gamma^\eps)
+k\, \lim_{\eps\to 0}\sigma_k(\mu_1+a+\eps,\dots,\mu_n+a+\eps)^{\frac{1}{k}}
\\
&= -a(n-k+1)\lim_{\eps\to 0} \sigma_{k-1}(\gamma^\eps)
= -a(n-k+1)\lim_{\eps\to 0} \left(\frac{\sigma_{k-1}(\mu_1+a+\eps,\dots,\mu_n+a+\eps)}{\sigma_{k}(\mu_1+a+\eps,\dots,\mu_n+a+\eps)^\frac{k-1}{k}}
\right).
\end{split}
\]

Recall that $$\lim_{\eps\to 0} \sigma_{k}(\mu_1+a+\eps,\dots,\mu_n+a+\eps)= \sigma_{k}(\mu_1+a,\dots,\mu_n+a)=0,$$ 
and if we have $$\lim_{\eps\to 0} \sigma_{k-1}(\mu_1+a+\eps,\dots,\mu_n+a+\eps)= \sigma_{k-1}(\mu_1+a,\dots,\mu_n+a)>0$$ we can 
conclude that $$ \lim_{\eps\to 0}\sum_{i=1}^n \mu_i \,\sigma_{k-1,i}(\gamma^\eps)=-\infty $$
as desired.
If this is not the case we have to look at the rate of convergence in more detail.

We consider two cases. First we assume that  $\mu_i+a\neq 0$ for some $i\in\{1,\ldots, n\}$.
In this case
we have that $\sigma_{j}(\mu_1+a,\dots,\mu_n+a)>0$ for some $j\in\{1,\ldots, k-1\}$
because otherwise
\[
\sum_{i=1}^n (\mu_n+a)^2= 
\sigma_{1}^2(\mu_1+a,\dots,\mu_n+a)
-2 \sigma_{2}(\mu_1+a,\dots,\mu_n+a)
=0,
\]
a contradiction.
%Then, since we are assuming that $\mu_i+a\neq 0$ for every $i$,  
Let $l\in\{1,\ldots, k-1\}$ be the largest integer such that $\sigma_{l}(\mu_1+a,\dots,\mu_n+a)>0$.

We have
\[
\begin{split}
 \sigma_{k} & (\mu_1+a+\eps,\dots,\mu_n+a+\eps)
=
\sum_{i=0}^{k}
\binom{n-i}{n-k}\,
\sigma_{i}(\mu_1+a,\dots,\mu_n+a)\,\varepsilon^{k-i},
\\
&=
\varepsilon^{k-l}
\left[
\binom{n}{n-k}\,\varepsilon^{l}
+
\binom{n-1}{n-k}\,
\sigma_{1}(\mu_1+a,\dots,\mu_n+a)\,\varepsilon^{l-1}
+
\cdots \right. \\
& \qquad\qquad\qquad \qquad\qquad\qquad \qquad\qquad  \left.  \cdots +
\binom{n-l}{n-k}\,
\sigma_{l}(\mu_1+a,\dots,\mu_n+a)
\right]
\\
&=
\varepsilon^{k-l}
\left[
\binom{n-l}{n-k}\,
\sigma_{l}(\mu_1+a,\dots,\mu_n+a)
+O(\varepsilon)
\right]
\end{split}
\]
as $\eps\to 0$ (in the first equality we have defined $\sigma_0(\mu_1+a,\dots,\mu_n+a)=1$). Similarly,
\[
\sigma_{k-1}(\mu_1+a+\eps,\dots,\mu_n+a+\eps)
=
\varepsilon^{k-1-l}
\left[
\binom{n-l}{n-k+1}\,
\sigma_{l}(\mu_1+a,\dots,\mu_n+a)
+O(\varepsilon)
\right]
\]
as $\eps\to 0$. 
Therefore,
\[
\frac{\sigma_{k-1}(\mu_1+a+\eps,\dots,\mu_n+a+\eps)}{\sigma_{k}(\mu_1+a+\eps,\dots,\mu_n+a+\eps)^\frac{k-1}{k}}
=
\left(
\frac{
\binom{n-l}{n-k+1}\,
\sigma_{l}(\mu_1+a,\dots,\mu_n+a)
+O(\varepsilon)
}
{
\left[
\binom{n-l}{n-k}\,
\sigma_{l}(\mu_1+a,\dots,\mu_n+a)
+O(\varepsilon)
\right]^\frac{k-1}{k}
}
\right)
\,
\varepsilon^{-\frac{l}{k}}
\]
as $\eps\to 0$ 
and we conclude that
\[
\lim_{\eps\to 0}\sum_{i=1}^n \mu_i \,\sigma_{k-1,i}(\gamma^\eps)=-\infty.
\]

Finally, in the case that  $\mu_i=-a$ for every $i\in\{1,\dots,n\}$ we consider $\gamma_b=(1,\dots,1,b,1/b,0,\dots,0)$ where $b>0$ and $k-2$ coordinates are equal to 1. We have $\gamma_b\in\Gamma_k$, $\sigma_k(\gamma_b)=1$ and
\[
\sum_{i=1}^n \mu_i \,\sigma_{k-1,i}(\gamma_b)=
-a \sum_{i=1}^n \sigma_{k-1,i}(\gamma_b)=
 -a(n-k+1) \sigma_{k-1}(\gamma_b)
=
-a (n-k+1)\Big(k-2+b+\frac{1}{b}\Big)
\]
which goes to $-\infty$ as $b\to\infty$.
\end{proof}

We are now ready to show Lemma \ref{def.k2}, the matrix counterpart of Lemma \ref{def.k}.
\begin{proof}[Proof of Lemma \ref{def.k2}]
Given $M\in S^n(\R)$ we consider $A\in\A_k$ such that both matrices are diagonal in the same basis. We have
\[
\tr(A^tMA)
=\tr(AA^tM)
=\sum_{i=1}^n \lambda_{i}(AA^t)\lambda_{i}(M)
=\sum_{i=1}^n \lambda_{i}^2(A)\lambda_{i}(M).
\]

Then, by the definition of $\A_k$ and Lemma~\ref{def.k} we get that
\[
\inf_{A\in\mathcal{A}_k}\tr(A^tMA)
\leq 
\mathop{\mathop{\inf}_{\gamma\in  \Gamma_k}}_
{\sigma_k(\gamma)=1}
\sum_{i=1}^n \lambda_{i}(M) \,\sigma_{k-1,i}(\gamma)
=
\left\{
\begin{aligned}
&k\, \sigma_k(\lambda(M))^{\frac{1}{k}}
 & &\textrm{if}\ M\in \overline \Gamma_k,\\
& -\infty& &\textrm{otherwise}.
\end{aligned}
\right.
\]
Therefore,  it only remains to prove that 
\begin{equation}\label{interm.goall}
\tr(A^tMA)\geq k\, \sigma_k(\lambda(M))^{\frac{1}{k}}
\end{equation}
for every $A\in\mathcal{A}_k$ and $M\in \overline \Gamma_k$.
To that end we recall the following inequality by Marcus (see \cite{Marcus})
\[
\tr(XM)\geq \min_{p}\sum_{i=1}^n \lambda_i(M)\lambda_{p(i)}(X),
\]
where $p$ ranges over the permutations of the numbers $\{1,\dots,n\}$.
Recalling that $A\in\mathcal{A}_k$ we obtain
\[
\tr(A^tMA)\geq \min_{p}\sum_{i=1}^n \lambda_i(M)\sigma_{k-1,p(i)}(\gamma)
\]
for some $\gamma\in \Gamma_k$ such that $\sigma_k(\gamma)=1$.
Let $\tilde p$ be the permutation where the minimum is attained and
observe that 
\[
\sigma_{k-1,\tilde{p}(i)}(\gamma)=\sigma_{k-1,i}(\tilde\gamma)
\]
where $\tilde\gamma$ is such that $\tilde\gamma_{i}=\gamma_{\tilde{p}(i)}$.
We have
\[
\tr(A^tMA)\geq \sum_{i=1}^n \lambda_i(M)\sigma_{k-1,i}(\tilde\gamma).
\]
Observe that $\tilde \gamma\in\Gamma_k$ and $\sigma_k(\tilde\gamma)=1$, hence, by Lemma~\ref{def.k} we have
\[
\sum_{i=1}^n \lambda_i(M)\sigma_{k-1,i}(\tilde\gamma)\geq 
k\,\sigma_k(\lambda(M))^{\frac{1}{k}}
\]
and \eqref{interm.goall} follows.
\end{proof}

Recall that a function $u\in C^2(\Omega)$ is called $k$-convex 
whenever  $D^2u(x)\in
\overline\Gamma_k$ for every $x\in\Omega$, see \cite{Trudinger.Wang.1997,Trudinger.Wang.1999,Trudinger.Wang.2002}. 
As a consequence of  Lemma \ref{def.k2} we  have the following result.

\begin{corollary}\label{coro.GammaA=Gammak}
A function $u\in C^2(\Omega)$ is $k$-convex if and only if it is $\A_k$-admissible. 
In other words, we have
\[%\begin{equation}\label{corollary.k.convex}
 \Gamma_{\mathcal{A}_k}=\overline{\Gamma}_k. 
\]%\end{equation}
\end{corollary}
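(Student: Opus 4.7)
The proof will be a short unwinding of definitions together with a direct application of Lemma \ref{def.k2}, which has already done all the substantive work.

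My plan is to first establish the set equality $\Gamma_{\mathcal{A}_k} = \overline{\Gamma}_k$, from which the equivalence for $C^2$ functions follows immediately. By definition,
\[
\Gamma_{\mathcal{A}_k} = \Big\{ M \in S^n(\mathbb{R}) : \inf_{A \in \mathcal{A}_k} \tr(A^t M A) > -\infty \Big\}.
\]
Lemma \ref{def.k2} gives an explicit formula for this infimum: it equals $k\,\sigma_k(\lambda(M))^{1/k}$ (a finite quantity) whenever $M \in \overline{\Gamma}_k$, and equals $-\infty$ otherwise. Reading off the two cases, $F(M) > -\infty$ if and only if $M \in \overline{\Gamma}_k$, which is precisely the claimed equality of cones.

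Having established $\Gamma_{\mathcal{A}_k} = \overline{\Gamma}_k$, the conclusion for $u \in C^2(\Omega)$ is immediate: $u$ is $\mathcal{A}_k$-admissible means $D^2 u(x) \in \Gamma_{\mathcal{A}_k}$ for every $x \in \Omega$ (by the definition of admissibility given before Theorem \ref{thm.main3.intro}), while $u$ is $k$-convex means $D^2 u(x) \in \overline{\Gamma}_k$ for every $x \in \Omega$. Since the two cones coincide, the two notions coincide pointwise in $x$, hence globally on $\Omega$.

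There is really no obstacle here — the corollary is a direct corollary of Lemma \ref{def.k2}. The only point worth noting in the write-up is to make explicit which direction of Lemma \ref{def.k2} is being used for each inclusion $\Gamma_{\mathcal{A}_k} \subseteq \overline{\Gamma}_k$ (the ``otherwise'' branch) and $\overline{\Gamma}_k \subseteq \Gamma_{\mathcal{A}_k}$ (the finite branch, using that $k\,\sigma_k(\lambda(M))^{1/k}$ is a real number whenever $M \in \overline{\Gamma}_k$, where $\sigma_k(\lambda(M)) \geq 0$).
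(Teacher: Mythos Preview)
Your proposal is correct and is exactly the approach the paper takes: the paper presents this corollary without a written proof, simply noting that it is a consequence of Lemma~\ref{def.k2}, and your argument spells out precisely this deduction by reading off the two branches of \eqref{eq.main.k.99}.
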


As a consequence of the Lemma \ref{def.k2} we can obtain Theorem~\ref{thm.k-hessians} from Theorem \ref{thm.main3.intro}.

\begin{proof} [Proof of Theorem~\ref{thm.k-hessians}]
From Corollary \ref{coro.GammaA=Gammak},  we get
$\Gamma_{\mathcal{A}_k}=\overline{\Gamma}_k,$
i.e., it is equivalent to be $k$-convex and $\mathcal{A}_k$-admissible.
Also observe that $F_k$ is continuous in $\A_k$.
Therefore we are under the hypothesis of Theorem~\ref{thm.main3.intro}
and the result follows.
\end{proof}

\section{Examples in the Heisenberg Group}\label{section.heisenberg}

Let $\mathbb{H}=(\mathbb{R}^3, *)$ be the Heisenberg group. We write a point  $q\in\mathbb{H}$  as $q=(x,y,z)$. The point $\bar{q}=(x,y)$ is the horizontal projection of $q$. 
When convenient,  we will also use the notation $q=(q_1,q_2,q_3) = (\bar{q}, q_3)$. The  group operation is 
$$q*q' = (x,y,z)*(x',y',z')=\Big(x+x', y+y', z+z'+\frac{1}{2}(xy'-yx')\Big).$$ 
The Kor\'{a}niy gauge  is given by
 $$|q|_K= \big((x^2+y^2)^2+16z^2\big)^{1/4}.$$ It induces a left-invariant metric $d(q,q')=|q^{-1}*q'|_K$ in $\mathbb{H}$. 
We also have a family of  anisotropic dilations: 
$$\rho_\lambda(x,y,z)=(\lambda x, \lambda y, \lambda^2 z), \qquad \quad {\lambda>0}$$ that are group homomorphisms.
The Kor\'{a}niy gauge  and the  Kor\'{a}niy metric are homogeneous with respect to the dilations
$$|\rho_\lambda(q)|_K= \lambda |q|_K, \qquad d(\rho_\lambda(q), \rho_\lambda(q'))= \lambda \,d(q,q').$$
The open ball centered at $q$ with radius $r>0$  is a translation and dilation of the open ball centered at $0$ of radius $1$
$$B_r(q) = \{q'\in \mathbb{H}; ~ d(q,q')<r\} = q*B_r(0)= q*\rho_r (B_1(0)).$$
 Euclidean balls in $2$ dimensions will be denoted by $B^2_r$. \par

The vector fields
$$X=\partial_{x}-\frac{y}{2}\partial_{z},\quad Y=\partial_{y}+\frac{x}{2}\partial_{z},\quad
Z=\partial_{z} $$ are left-invariant and form a basis for the Lie algebra of $\mathbb{H}$. 
The only non-trivial commuting relation is $Z=[X,Y]$.  The horizontal tangent space at the point $q$ is the plane generated by $X(q)$ and $Y(q)$ 
\begin{equation}\label{horizontalplane}
T_q= \textrm{span}\left\{\left(1, 0, -\frac{y}{2}\right), \left(0,1,
\frac{x}{2}\right)\right\} = \left(\frac{y}{2}, - \frac{x}{2},1\right)^\perp.\end{equation}
The horizontal gradient and the sub-Laplacian of
a function $v:\mathbb{H}\to\mathbb{R}$ are respectively the vector field and function
$$ \nabla_{\mathbb{H}} v= (Xv) X+ (Yv)Y, \qquad   \Delta_{\mathbb{H}} v=(X^2+Y^2)v.$$
The  horizontal second derivatives are given by the non necessarily symmetric $2\times2$ matrix 
$$\nabla_{\mathbb{H}}^2v(q)= \left(
\begin{array}{cc}
X^2v(q) & XYv(q) \\
YXv(q) & Y^2 v(q)
\end{array}
\right).
$$
The symmetrized horizontal second derivatives are $(\nabla_{\mathbb{H}}^2v(q))^*=\frac{1}{2}\left( \nabla_{\mathbb{H}}^2v(q)+(\nabla_{\mathbb{H}}^2v(q))^t\right)
$.
Our starting point is the  Taylor expansion for  a function $v\in\mathcal{C}^2(\mathbb{H})$ at a point $q=(x,y,z)\in\mathbb{H}$ adapted to the Heisenberg group,  which we take from Section 3 in
\cite{LMR20},
\begin{equation}\label{taylor1}
v(p) = v(q) + \langle (\nabla_{\mathbb{H}},
Z) v(q), q^{-1}*p\rangle+
 \frac{1}{2} \big\langle (\nabla_{\mathbb{H}}^2v(q))^* \overline{q^{-1}*p},\overline{q^{-1}*p}  \big\rangle + o(|q^{-1}* p|_K^2).
\end{equation}
Next, we consider the horizontal average operator
$$
{\mathcal{A}}_2(v, \varepsilon)(q)  = \dashint_{B^2_ \varepsilon(0)}v(q*(a,b,0))\,da\,db
= \dashint_{B^2_1(0)}v\big(q+  \varepsilon(a,b,\frac{1}{2}(xb-ya))\big) \,da\,db.
$$
From Proposition 2.3 in \cite{LMR20} we get 
\begin{lemma}\label{2dexpansion}
 $$
 {\mathcal{A}}_2(v, \varepsilon)(q)  - v(q) = \frac{ \varepsilon^2}{8}\Delta_{\mathbb{H}} v(q)+o( \varepsilon^2),
\qquad\textrm{ as $ \varepsilon\to0$. }
 $$
 \end{lemma}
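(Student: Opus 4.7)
The plan is to insert the Heisenberg Taylor expansion \eqref{taylor1} directly into the integrand of ${\mathcal{A}}_2(v,\varepsilon)(q)$ and compute each contribution separately. Concretely, for $(a,b)\in B^2_1(0)$ I would take $p = q*(\varepsilon a,\varepsilon b,0)$, so that $q^{-1}*p=(\varepsilon a,\varepsilon b,0)$, and hence $\overline{q^{-1}*p}=(\varepsilon a,\varepsilon b)$ while $|q^{-1}*p|_K=\varepsilon(a^2+b^2)^{1/2}\leq\varepsilon$. This last bound is the crucial uniformity estimate: it forces the remainder $o(|q^{-1}*p|_K^2)$ to be uniformly $o(\varepsilon^2)$ as $(a,b)$ ranges over $B^2_1(0)$, so its average is also $o(\varepsilon^2)$.

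With this choice of $p$, \eqref{taylor1} becomes
\[
v\big(q*(\varepsilon a,\varepsilon b,0)\big) = v(q) + \varepsilon\big(a\,Xv(q)+b\,Yv(q)\big) + \frac{\varepsilon^2}{2}\big\langle (\nabla_{\mathbb{H}}^2 v(q))^*(a,b),(a,b)\big\rangle + o(\varepsilon^2),
\]
where the $Zv(q)$ contribution drops out because the third coordinate of $q^{-1}*p$ is zero.

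I would then integrate this equality over $B^2_1(0)$. The linear piece vanishes since $\dashint_{B^2_1(0)}a\,da\,db=\dashint_{B^2_1(0)}b\,da\,db=0$ by symmetry. For the quadratic piece, I would invoke the standard two-dimensional identity
\[
\dashint_{B^2_1(0)}\langle M(a,b),(a,b)\rangle\,da\,db = \frac{1}{4}\,\mathrm{trace}(M),
\]
which is the $n=2$, $\varepsilon=1$ case of Lemma \ref{lemma.trace.integral}. Applying this with $M=(\nabla_{\mathbb{H}}^2 v(q))^*$ yields a contribution of $\tfrac{\varepsilon^2}{8}\,\mathrm{trace}\big((\nabla_{\mathbb{H}}^2 v(q))^*\big)$. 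Since the trace is unchanged by symmetrization and equals $X^2 v(q)+Y^2 v(q)=\Delta_{\mathbb{H}}v(q)$, the desired formula follows upon subtracting $v(q)$.

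There is no real obstacle here; the only delicate point is the legitimacy of pulling the $o$-term outside the integral. That is handled precisely by the uniform Kor\'aniy estimate $|q^{-1}*p|_K\leq\varepsilon$ noted above, so no further argument is needed.
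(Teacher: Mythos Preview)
Your argument is correct. The paper does not actually prove this lemma; it simply invokes Proposition~2.3 of \cite{LMR20}. Your direct computation---substituting $p=q*(\varepsilon a,\varepsilon b,0)$ into the Heisenberg Taylor expansion \eqref{taylor1}, integrating over $B^2_1(0)$, and using the $n=2$ case of Lemma~\ref{lemma.trace.integral}---is exactly the natural proof and supplies the details that the paper defers to the reference. Your handling of the remainder via the uniform bound $|q^{-1}*p|_K\le\varepsilon$ is the right justification for exchanging the $o(\varepsilon^2)$ with the average.
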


By changing variables we obtain  averages over general horizontal planes.

\begin{lemma}\label{2dexpansionA} Let $A$ be a $2\times2$ matrix and let $\eps>0$.   We have
 $$ \dashint_{B^2_1(0)}v(q*\eps (A\cdot(a,b),0))\,da\,db -v(q) =\frac{\eps^2}{8}\tr(A^t(\nabla_{\mathbb{H}}^2v(q))^*A) +o( \eps^2 |A|^2)$$
as $\varepsilon\to0$. 
 \end{lemma}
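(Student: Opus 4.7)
The plan is to substitute the Heisenberg Taylor expansion \eqref{taylor1} directly into the integrand and integrate term by term over the Euclidean unit ball $B^2_1(0)$. Setting $p = q*\eps(A\cdot(a,b),0)$, we have $q^{-1}*p = \eps(A\cdot(a,b),0)$, so $\overline{q^{-1}*p} = \eps A\cdot(a,b)$ and the vertical slot of $q^{-1}*p$ vanishes. Because the $z$-component is zero, the Kor\'aniy gauge reduces to $|q^{-1}*p|_K = \eps|A\cdot(a,b)| \le \eps|A|$, uniformly for $(a,b)\in B^2_1(0)$; thus the remainder $o(|q^{-1}*p|_K^2)$ in \eqref{taylor1} is uniformly $o(\eps^2|A|^2)$ as $\eps\to0$.

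Next, I would integrate \eqref{taylor1} term by term. The zeroth-order term simply contributes $v(q)$. The first-order term $\langle (\nabla_{\mathbb{H}}, Z)v(q), q^{-1}*p\rangle$ reduces to $\eps\langle \nabla_{\mathbb{H}} v(q), A\cdot(a,b)\rangle$, since the $Z$-slot is zero, and averages to $0$ by the symmetry $\dashint_{B^2_1(0)}(a,b)\,da\,db = 0$. The quadratic term is
$$
\frac{\eps^2}{2}\dashint_{B^2_1(0)}
\left\langle (\nabla_{\mathbb{H}}^2 v(q))^*\, A\cdot(a,b),\, A\cdot(a,b)\right\rangle\,da\,db
= \frac{\eps^2}{2}\dashint_{B^2_1(0)}
\left\langle A^t(\nabla_{\mathbb{H}}^2 v(q))^* A\cdot(a,b),\,(a,b)\right\rangle\,da\,db.
$$

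Finally, I would invoke Lemma \ref{lemma.trace.integral} with $n=2$ and radius $1$, which gives $\dashint_{B^2_1(0)}\langle M\xi,\xi\rangle\,d\xi = \tfrac{1}{4}\tr(M)$. Applying this with $M = A^t(\nabla_{\mathbb{H}}^2 v(q))^* A$ turns the quadratic term into $\tfrac{\eps^2}{8}\tr\bigl(A^t(\nabla_{\mathbb{H}}^2 v(q))^* A\bigr)$, matching the claim. The only subtle point is to check that the Taylor remainder, after integration, remains $o(\eps^2|A|^2)$; this follows immediately from the uniform pointwise bound $|q^{-1}*p|_K \le \eps|A|$ on $B^2_1(0)$, so averaging preserves the little-o order. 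Nothing here is especially hard, but controlling the remainder uniformly in $(a,b)$ is the step where the specific structure $z=0$ in the second slot of the product $q*\eps(A\cdot(a,b),0)$ is crucial, since it eliminates the contribution of $|z|$ to the Kor\'aniy gauge.
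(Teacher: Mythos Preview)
Your argument is correct and follows the same route as the paper: substitute the Heisenberg Taylor expansion \eqref{taylor1} at $p=q*\eps(A\cdot(a,b),0)$, observe that $q^{-1}*p=\eps(A\cdot(a,b),0)$ so the vertical component vanishes and $|q^{-1}*p|_K=\eps|A\cdot(a,b)|$, and then integrate. The paper's proof actually stops after writing the pointwise expansion and leaves the averaging implicit; you go one step further by explicitly integrating, noting the odd symmetry that kills the first-order term, and invoking Lemma~\ref{lemma.trace.integral} with $n=2$ to produce the factor $\tfrac{1}{8}$, which is a welcome clarification rather than a different method.
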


\begin{proof}Set 
$p=q*\eps (A\cdot(a, b), 0)$, where $(a,b)\in B^2_1(0)$. Observe that $q^{-1}*p=\eps(A\cdot(a,b), 0)$ so that $\overline{q^{-1}*p}= \eps A\cdot(a,b)$. Writing the Taylor expansion \eqref{taylor1} we get
\begin{align*}
v(p) & =   v(q) +  \langle( \nabla_{\mathbb{H}}, Z) v(q), (\eps A\cdot (a,b),0)\rangle \\
& \qquad + 
 \frac{\eps^2}{2} \langle (\nabla_{\mathbb{H}}^2v(q))^*\cdot A\cdot (a,b),A\cdot (a,b)  \rangle + o(|\eps A\cdot (a,b),0)|_K^2)\\
 &= v(q) + \eps \, \langle \nabla_{\mathbb{H}} v(q),  A\cdot (a,b)\rangle+
 \frac{\eps^2}{2} \langle (A^t \cdot \nabla_{\mathbb{H}}^2v(q))^*\cdot A\cdot (a,b),(a,b)  \rangle 
 +o (|\eps A\cdot (a,b)|^2).\qedhere
 \end{align*}
\end{proof}
We now state a version of Theorem \ref{thm.main3.intro} in the Heisenberg group  for the expression $$F(D^2v(q))= 2 \left( \det (\nabla_{\mathbb{H}}^2v(q))^*\right)^{1/2}.$$
Recall that we have
\[
\inf_{A\in\mathcal{A}} \textrm{trace}(A^tMA)=
\left\{
\begin{aligned}
&2\left(
\det{M}\right)^{1/2} && \textrm{if}\ M\geq0
\\
& -\infty& &\textrm{otherwise,}
\end{aligned}
\right.
\]
for the unbounded set $\mathcal{A}=\{A\in S_+^2(\mathbb{R}): \det(A)=~1\}$, for example 
 see \cite{[Blanc et al. 2020]}. We say that a function $v\in C^2$ is horizontally convex if
$(\nabla_{\mathbb{H}}^2v(q))^*\ge 0$. 

\begin{theorem} Let $v\in C^2$ be horizontally convex and $\phi(\eps)$ a positive function satisfying \eqref{hypothesis.phi.intro}. Then, we have 
$$
\mathop{\mathop{\inf}_{\det(A)=1}}_{A\leq \phi(\varepsilon)I}  \dashint_{B^2_1(0)} v(q*(A(a,b),0))\,da\, db
-v(q)= \frac{\eps^2}{4}\left( \det (\nabla_{\mathbb{H}}^2v(q))^*\right)^{1/2}+ o(\eps^2),
\qquad\textrm{as $\eps\to0$.}
$$
\end{theorem}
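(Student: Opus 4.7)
The plan is to follow the structure of the proof of Theorem~\ref{thm.main3.intro}, combining the Heisenberg Taylor expansion~\eqref{taylor1} with a two-dimensional subharmonic mean value argument. Set $M(p):=(\nabla_{\mathbb{H}}^2 v(p))^*$, which lies in $S_+^2(\mathbb{R})$ at every $p$ by horizontal convexity, and let
\[F(N):=2(\det N)^{1/2}=\inf\bigl\{\tr(A^tNA): A\in S_+^2(\mathbb{R}),\ \det A=1\bigr\}\qquad \text{for } N\in S_+^2(\mathbb{R}),\]
which is continuous on $S_+^2(\mathbb{R})$. The constraints $\det A=1$ and $A\leq\phi(\eps)I$ force $A$ to be positive definite with both eigenvalues in $[1/\phi(\eps),\phi(\eps)]$.

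For the upper bound, apply \eqref{taylor1} in the one-sided form $v(p)\leq P(p)+\eta|q^{-1}*p|_K^2$, valid for $|q^{-1}*p|_K<\delta$. For $p=q*\eps(A(a,b),0)$ with $(a,b)\in B^2_1(0)$ and $A\leq\phi(\eps)I$, we have $|q^{-1}*p|_K=\eps|A(a,b)|\leq\eps\phi(\eps)\to 0$, so the bound applies once $\eps$ is small; moreover the $z$-component of $q^{-1}*p$ vanishes, so $|q^{-1}*p|_K^2=\eps^2|A(a,b)|^2$. Integrating over $(a,b)\in B^2_1(0)$ and using $\dashint_{B^2_1(0)}\langle N(a,b),(a,b)\rangle\,da\,db=\tfrac{1}{4}\tr N$ as in the proof of Lemma~\ref{2dexpansionA} yields
\[\dashint_{B^2_1(0)} v(q*\eps(A(a,b),0))\,da\,db-v(q) \leq \frac{\eps^2}{8}\tr\!\bigl(A^t(M(q)+2\eta I)A\bigr).\]
A straightforward adaptation of Lemma~\ref{continuity} gives $\mathop{\mathop{\inf}_{\det A=1}}_{A\leq\phi(\eps)I}\tr(A^t(M(q)+2\eta I)A)\to F(M(q))$ as $\eps,\eta\to 0$, producing the upper bound $\tfrac{\eps^2}{4}(\det M(q))^{1/2}+o(\eps^2)$.

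For the lower bound, fix an admissible $A=\bigl(\begin{smallmatrix}\alpha&\beta\\\gamma&\delta\end{smallmatrix}\bigr)$ and set $\tilde u(a,b)=v(\Phi(a,b))$ with $\Phi(a,b)=q*\eps(A(a,b),0)$. Since $\Phi$ is affine in $(a,b)$, the chain rule gives $\Delta_{(a,b)}\tilde u=\langle D^2 v(\Phi) J^a,J^a\rangle+\langle D^2 v(\Phi) J^b,J^b\rangle$ where $J^a,J^b$ are the columns of the Jacobian of $\Phi$. A direct calculation produces the decomposition
\[J^a=\eps\bigl(\alpha X(\Phi)+\gamma Y(\Phi)\bigr)+\tfrac{\eps^2}{2}\det(A)\,b\,Z(\Phi),\qquad J^b=\eps\bigl(\beta X(\Phi)+\delta Y(\Phi)\bigr)-\tfrac{\eps^2}{2}\det(A)\,a\,Z(\Phi).\]
Using $\langle D^2 v(\Phi) X(\Phi),X(\Phi)\rangle=X^2 v(\Phi)$, $\langle D^2 v(\Phi) Y(\Phi),Y(\Phi)\rangle=Y^2 v(\Phi)$, and $\langle D^2 v(\Phi) X(\Phi),Y(\Phi)\rangle=\tfrac{1}{2}(XY+YX)v(\Phi)$ (checked by direct computation), the horizontal parts of $J^a,J^b$ contribute $\eps^2\tr(A^t M(\Phi) A)$, while the vertical corrections are bounded by $C\eps^3|A|+C\eps^4$; under $A\leq\phi(\eps)I$ this is $o(\eps^2)$ uniformly in $(a,b)$ and $A$, since $\eps\phi(\eps)\to 0$. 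The inequality $\tr(A^t N A)\geq F(N)$ for $\det A=1,\ A\in S_+^2(\mathbb{R}),\ N\in S_+^2(\mathbb{R})$ combined with the continuity of $F$ at $M(q)$ then yields $\Delta_{(a,b)}\tilde u\geq\eps^2\bigl(F(M(q))-2\eta\bigr)$ on $B^2_1(0)$ for $\eps$ small. The two-dimensional subharmonic mean value inequality (if $\Delta w\geq c$ on $B^2_1$ then $\dashint_{B^2_1} w-w(0)\geq c/8$) gives
\[\dashint_{B^2_1(0)}v(q*\eps(A(a,b),0))\,da\,db-v(q)\geq \frac{\eps^2\bigl(F(M(q))-2\eta\bigr)}{8},\]
and taking the infimum over admissible $A$ followed by $\eta\to 0$ produces the matching lower bound.

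The main technical difficulty lies in the lower bound: the Euclidean proof relies on the exact identity $\Delta_y u(x+Ay)=\tr(A^t D^2 u(x+Ay)A)$ to drive the subharmonic mean value step, but in the Heisenberg case $\Delta_{(a,b)}\tilde u$ only agrees with $\eps^2\tr(A^t M(\Phi)A)$ up to a vertical correction coming from the non-horizontal component of $J^a$ and $J^b$. Controlling that correction uniformly over $\{A:\det A=1,\ A\leq\phi(\eps)I\}$ is precisely what the hypothesis $\eps\phi(\eps)\to 0$ allows.
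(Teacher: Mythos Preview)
Your proof is correct and carries out exactly the ``straightforward adaptation'' of the Euclidean Monge--Amp\`ere argument that the paper alludes to (the paper gives no further details beyond citing \cite{[Blanc et al. 2020]}). The only genuinely Heisenberg-specific step is the one you isolate: the decomposition of $J^a,J^b$ into horizontal and vertical pieces and the control of the $O(\eps^3|A|)$ vertical correction via $\eps\phi(\eps)\to0$, which stands in for the exact identity $\Delta_y u(x+Ay)=\tr(A^t D^2u(x+Ay)A)$ used in the Euclidean lower bound of Theorem~\ref{thm.main3.intro}.
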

The proof of this theorem is a straightforward adaptation of the proof of Theorem 1.1 in \cite{[Blanc et al. 2020]}. Note that the two-dimensional averages are taken over ellipsoids in the horizontal tangent plane $T_q$ defined in \eqref{horizontalplane}.

%---------------------------------
% BIBLIOGRAFIA
%---------------------------------

\bibliographystyle{plain}

\end{document}